\newtheorem{lem}{Lemma}[section]
\newtheorem{thm}[lem]{Theorem}
\newtheorem{prop}[lem]{Proposition}
\newtheorem{cor}[lem]{Corollary}
\newtheorem{remark}[lem]{Remark}
\theoremstyle{definition}
\numberwithin{equation}{section}
 \newcommand{\cha}{\operatorname{Ch}(A)}
 \DeclareSymbolFont{largesymbol}{OMX}{yhex}{m}{n}
 \DeclareMathAccent{\Widehat}{\mathord}{largesymbol}{"62}
 \DeclareSymbolFont{largesymbol}{OMX}{yhex}{m}{n}
\begin{document}



\title[Commutative JB$^*$-triples satisfy the complex Mazur--Ulam property]
{Every commutative JB$^*$-triple satisfies the complex  Mazur--Ulam property}

\author[D. Cabezas]{David Cabezas}
\address[D. Cabezas]{
Departamento de An{\'a}lisis Matem{\'a}tico, Facultad de
Ciencias, Universidad de Granada, 18071 Granada, Spain.}
\email{dcabezas@ugr.es}

\author[M. Cueto-Avellaneda]{Mar{\'i}a Cueto-Avellaneda}
\address[M. Cueto-Avellaneda]{
School of Mathematics, Statistics and Actuarial Science, University of Kent, Canterbury, Kent CT2 7NX, UK}
\email{emecueto@gmail.com}

\author[D. Hirota]{Daisuke Hirota}
\address[D. Hirota]{Graduate School of Science and Technology, Niigata University, Niigata 950-2181, Japan}
\email{hirota@m.sc.niigata-u.ac.jp}

\author[T. Miura]{Takeshi Miura}
\address[T. Miura]{Department of Mathematics, Faculty of Science, Niigata University, Niigata 950-2181, Japan}
\email{miura@math.sc.niigata-u.ac.jp}

\author[A.M. Peralta]{Antonio M. Peralta}
\address[A.M. Peralta]{Instituto de Matem{\'a}ticas de la Universidad de Granada (IMAG), Departamento de An{\'a}lisis Matem{\'a}tico, Facultad de
Ciencias, Universidad de Granada, 18071 Granada, Spain.}
\email{aperalta@ugr.es}

\subjclass[2010]{46J10, 46B04, 46B20, 46J15, 47B49, 17C65}
\keywords{extension of isometries, Tingley's problem, abelian JB$^*$-triple}

\begin{abstract} We prove that every commutative JB$^*$-triple satisfies the complex Mazur--Ulam property. Thanks to the representation theory, we can identify commutative JB$^*$-triples as spaces of complex-valued continuous functions on a principal $\mathbb{T}$-bundle $L$ in the form $$C_0^\mathbb{T}(L):=\{a\in C_0(L):a(\lambda t)=\lambda a(t)\text{ for every } (\lambda,t)\in\mathbb{T}\times L\}.$$ We prove that every surjective isometry from the unit sphere of $C_0^\mathbb{T}(L)$ onto the unit sphere of any complex Banach space admits an extension to a surjective real linear isometry between the spaces. 
\end{abstract}

\maketitle


\section{Introduction}

New recent advances continue improving our understanding of Tingley's problem by enlarging the list of positive solutions, and the range of spaces satisfying the Mazur--Ulam property. As introduced in \cite{ChenDong2011}, a Banach space $X$ satisfies the Mazur--Ulam property if every surjective isometry from its unit sphere onto the unit sphere of any other Banach space admits and extension to a surjective real linear isometry between the spaces. The new advances are struggling to provide new tools and techniques to attack this intriguing open question on the possibility of extending a surjective isometry between the unit spheres of two Banach spaces to a surjective real linear isometry between the spaces. A remarkable outstanding advance has been obtained by T. Banakh in \cite{Banakh21SolutionTingley2dim}, who proved that every 2-dimensional Banach space $X$ satisfies the \emph{Mazur--Ulam property}, that is, every surjective isometry from the unit sphere of $X$ onto the unit sphere of any other Banach space extends to a real linear isometry between the involved Banach spaces. This is in fact the culminating point of deep technical advances (see \cite{Banakh21Smoothtwodim, CabSan19, BanakhCabello21}).\smallskip

The abundance of unitary elements in unital C$^*$-algebras, real von Neumann algebras and JBW$^*$-algebras is a key property to prove that these spaces together with all JBW$^*$-triples satisfy the Mazur--Ulam property (cf. \cite{MoriOza2018, BeCuFerPe2018,KalPe2019}). A prototypical example of non-unital C$^*$-algebra is given by the C$^*$-algebra $K(H),$ of all compact operators on an infinite dimensional complex Hilbert space $H$, or more generally, by a compact C$^*$-algebra (i.e. a $c_0$-sum of $K(H)$-spaces). Compact C$^*$-algebras and weakly compact JBW$^*$-triples are in the list of complex Banach spaces satisfying the Mazur--Ulam property   (see \cite{Pe2020FILOMAT}).\smallskip

Another research line is exploring Tingley's problem in the case of certain function algebras and spaces. The first positive solution to Tingley's problem for a Banach space consisting of analytic functions, apart from Hilbert spaces, was obtained by O. Hatori, S. Oi and R. Shindo Togashi in \cite{HatOiTog}, where it is proved that each  surjective isometry between the unit spheres of two uniform algebras (i.e. closed subalgebras of $C(K)$ containing the constants and separating the points of $K$) can be always extended to a surjective real-linear isometry between the uniform algebras. O. Hatori has gone further by showing that every uniform algebra satisfies the complex Mazur--Ulam property \cite[Theorem 4.5]{Hat2021}. The ``\emph{complex Mazur--Ulam property}'' has been coined by O. Hatori to denote those complex Banach spaces for which every surjective isometry from its unit sphere onto the unit sphere of a complex Banach spaces admits an extension to a real linear mapping between the spaces.\smallskip

The non-unital analogue of uniform algebras is materialized in the notion of uniformly closed function algebra on a locally compact Hausdorff space $L$. We recently showed that each surjective isometry between the unit spheres of two uniformly closed function algebras on locally compact Hausdorff spaces admits an extension to a surjective real linear isometry between these algebras (see \cite{cueto2021exploring}). In the just quoted reference we also proved that Tingley's problem admits a positive solution for any surjective isometry between the unit spheres of two commutative JB$^*$-triples, which are not, in general, subalgebras of the algebra $C_0(L)$ of all complex-valued continuous functions on $L$ vanishing at infinity (see Section \ref{sec: abelian JBstar triples} for the detailed representation of commutative JB$^*$-triples). In this note we shall employ a recent tool developed by O. Hatori in \cite{Hat2021} to infer that a stronger conclusion holds, namely, every commutative JB$^*$-triple satisfies the complex Mazur--Ulam property. Among the consequences we derive that every commutative C$^*$-algebra enjoys the complex Mazur--Ulam property. Commutative unital C$^*$-algebras satisfy the stronger Mazur-Ulam property by a result due to  M. Mori and N. Ozawa \cite{MoriOza2018}. Some other previous achievements in this line were obtained by R. Liu for $C(K, \mathbb{R})$ \cite[Corollary 6]{Liu2007}, and by O. Hatori for closed subspaces of $C_0({L},\mathbb{R})$ separating the points of $L$ and satisfying a topological-geometric hypothesis called condition $(r)$ (see \cite[Theorem 5.4]{Hat2021} in the preprint version).

\section{Preliminaries}\label{sec: preliminaries}

We shall briefly recall some basic terminology to understand the sufficient condition, established by O. Hatori in \cite[Proposition 4.4]{Hat2021}, to guarantee that a Banach space satisfies the complex Mazur--Ulam property.\smallskip

Let $X$ be a real or complex Banach space, and let $X^*,$ $S(X)$ and $\mathcal{B}_{X}$ denote the dual space, the unit sphere and the closed unit ball of $X$, respectively. It is known, thanks to Hahn-Banach theorem or Eidelheit's separation theorem, that maximal convex subsets of $S(X)$ and maximal proper norm closed faces of $\mathcal{B}_{X}$ define the same subsets (cf. \cite[Lemma 3.3]{Tan2016Mn} or \cite[Lemma 3.2]{Tan2016finitevN}). The set of all maximal convex subsets of $S(X),$ equivalently, all maximal proper norm closed faces of $\mathcal{B}_X$, will be denoted by $\mathfrak{F}_X$.  For each $F\in \mathfrak{F}_X$ there exists an extreme point $\varphi$ of the closed unit ball $\mathcal{B}_{X^*}$ such that $F = \varphi^{-1}\{1\}\cap  S(X)$ (cf. \cite[Lemma 3.3]{Tan2016Mn}). The set of all extreme points $\varphi$ of  $\mathcal{B}_{X^*}$ for which $\varphi^{-1}\{1\}\cap S(X)$ is a maximal convex subset of $S(X)$ will be denoted by $\mathcal{Q}_{X}.$ On the latter set we consider the equivalence relation defined by $$\varphi\sim \phi \Leftrightarrow \exists \gamma\in \mathbb{T} = S(\mathbb{K})\hbox{ with }  \varphi^{-1}\{1\}\cap  S(X) = (\gamma \phi)^{-1}\{1\}\cap  S(X),$$
where $\mathbb{K} = \mathbb{R}$ if $X$ is a real Banach space and  $\mathbb{K}= \mathbb{C}$ if $X$ is a complex Banach space. A \emph{set of representatives} for the quotient set $\mathcal{Q}_{X}/\sim$ (or for $\mathfrak{F}_X$) will consist in a subset $\mathbf{P}_{X}$ of $\mathcal{Q}_{X}$ which is formed by precisely one, and only one, element in each equivalent class of $\mathcal{Q}_{X}/\sim$. According to the just commented notation and results, for each $F\in \mathfrak{F}_X$ there exists a unique $\varphi\in \mathbf{P}_{X}$ and $\gamma\in \mathbb{T}$ such that $F = F_{\varphi, \gamma} := (\gamma \varphi)^{-1}\{1\}\cap  S(X)$ (cf. \cite[Lemma 2.5]{Hat2021}). That is, the elements in $\mathfrak{F}_X$ are bijectively labelled by the set $\mathbf{P}_{X}\times \mathbb{T},$ and we can define a bijection $\mathcal{I}_{X} : \mathfrak{F}_X \to \mathbf{P}_{X}\times \mathbb{T}$ labelling the set $\mathfrak{F}_X$.\smallskip

For example, by the classical description of the extreme points of the closed unit ball of the dual of a $C(K)$ space as those functionals of the form $\lambda \delta_t (f) = \lambda f(t)$ ($f\in C(K)$) with $t\in K,$ $\lambda \in \mathbb{T},$ the set $\mathbf{P}_{C(K)} = \{ \delta_t : t\in K\}$ is a set of representatives for $\mathfrak{F}_{C(K)}.$ It is shown in \cite[Example 2.4]{Hat2021} that for a uniform algebra $A$ over a compact Hausdorff space $K$, the set $\{\delta_t : t \in \cha\}$ is a set of representatives for $A$, where $\cha$ denotes the Choquet boundary of $A$.\smallskip

Let $A,B$ be non-empty closed subsets of a metric space $(E,d)$. The usual  Hausdorff distance between $A$ and $B$ is defined by
$$d_H(A,B)=\max\{\sup\limits_{a\in A} d(a,B), \sup_{b\in B} d(b,A)\}.$$ We shall employ this Hausdorff distance to measure distances between elements in $\mathfrak{F}_X.$\smallskip 

According to \cite{Hat2021}, a Banach space $X$ satisfies the \emph{condition of the Hausdorff distance} if the elements in  $\mathfrak{F}_X$ satisfy the following rules:
\begin{equation}\label{eq:condition hausdorff distance}
    d_H(F_{\varphi,\lambda},F_{\varphi',\lambda'})=\begin{cases}
    |\lambda - \gamma\lambda'|,& \hbox{ if } \varphi^{-1} \{1\}\cap S(X)=(\gamma \varphi')^{-1}\{1\}\cap S(X), \\
    2,&  \hbox{ if } \varphi\not\sim \varphi'.
    \end{cases}
\end{equation}
for $\varphi,\varphi'\in \mathcal{Q}_X$ and $\lambda,\lambda'\in\mathbb{T}$. Let $\mathbf{P}_{X}\subset \mathcal{Q}_{X}$ be a set of representatives for $\mathfrak{F}_{X}.$ Having in mind the properties of the mapping $\mathcal{I}_{X}^{-1}$ for $\mathbf{P}_{X},$ the condition of the Hausdorff distance in \eqref{eq:condition hausdorff distance} can be rewritten in the form  \begin{equation}\label{eq:condition hausdorff distance representatives}
    d_H(F_{\varphi,\lambda},F_{\varphi',\lambda'})=\begin{cases}
    |\lambda-\lambda'|, & \hbox{ if }  \varphi=\varphi',\\
    2, & \hbox{ if } \varphi \neq \varphi',
    \end{cases}
\end{equation}
for $\varphi,\varphi'\in \mathbf{P}_X$ and $\lambda,\lambda'\in\mathbb{T}$. Under the light of \cite[Lemma 3.1]{Hat2021} to conclude that a complex Banach space $X$ together with a set of representatives $\mathbf{P}_{X}$ satisfies \eqref{eq:condition hausdorff distance representatives} it suffices to prove that \begin{equation}\label{eq sufficient condition for the Hausdorff distance representatives} F_{\varphi,\lambda}\cap F_{\varphi',\lambda'}\neq\emptyset  \hbox{ for any }\varphi\neq \varphi'\hbox{ in } \mathbf{P}_{X}, \lambda,\lambda'\hbox{ in }\mathbb{T}. \end{equation}

Let us go back to the set $\mathcal{Q}_{X}$ determining the set $\mathfrak{F}_{X}$ of all maximal proper norm closed faces of $\mathcal{B}_{X}$. For $\varphi\in \mathcal{Q}_{X}$ and $\alpha\in\overline{\mathbb{D}} = \mathcal{B}_{\mathbb{K}}$ ($\mathbb{K} = \mathbb{R}$ or $\mathbb{C}$), we set
\begin{equation*}
M_{\varphi,\alpha}=\left\{x\in S(X) : d\left(x,F_{\varphi,\frac{\alpha}{|\alpha|}}\right)\leq 1-|\alpha|,\ d\left(x,F_{\varphi,-\frac{\alpha}{|\alpha|}}\right)\leq 1+|\alpha|\right\},
\end{equation*}
where $\frac{\alpha}{|\alpha|}= 1$ if $\alpha=0$. Note that in this definition the inequalities ``$\leq$'' can be replaced with equalities.
It is known that for each $\varphi$ in a set of representatives $\mathbf{P}_{X}$, the inclusion
$$M_{\varphi,\alpha}\subseteq \{x\in S(X): \varphi(x)=\alpha\}$$
holds for all $\alpha\in \mathbb{D}$ (cf. \cite[Lemma 4.3]{Hat2021}). The equality of these two sets is the second condition required in the following result, which can be applied to guarantee that a Banach space enjoys the Mazur--Ulam property \cite{Hat2021}.

\begin{prop}\label{p Hatori sufficient}{\rm\cite[Proposition 4.4]{Hat2021}} Let $X$ be a complex Banach space. Suppose that $\mathbf{P}_{X}$ is  a set of representatives for $\mathfrak{F}_{X}$. Assume that $X$ satisfies the following two hypotheses:
\begin{enumerate}[$(i)$] 
\item $X$ satisfies the condition of the Hausdorff distance,
\item For each $\varphi$ in $\mathbf{P}_{X}$ and $\alpha \in \mathbb{D}$ we have
$$M_{\varphi,\alpha}=\{x\in S(X): \varphi(x)=\alpha\}.$$
\end{enumerate} Then $X$ satisfies the complex Mazur-Ulam property.
\end{prop}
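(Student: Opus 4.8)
The plan is to fix a complex Banach space $Y$ and a surjective isometry $T\colon S(X)\to S(Y)$, and to manufacture a surjective real linear isometry $U\colon X\to Y$ with $U|_{S(X)}=T$. First I would record the two facts valid for every such $T$: it is antipode preserving, $T(-x)=-T(x)$, and it carries maximal convex subsets of $S(X)$ onto maximal convex subsets of $S(Y)$ (see \cite{MoriOza2018,Tan2016Mn}). Consequently $T$ induces a bijection $\widehat{T}\colon\mathfrak{F}_X\to\mathfrak{F}_Y$ that commutes with the antipodal map $F\mapsto -F$ and, being induced by a surjective isometry of the spheres, preserves the Hausdorff distance $d_H$.

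Next I would read off the geometry of $(\mathfrak{F}_X,d_H)$ from hypothesis $(i)$: by \eqref{eq:condition hausdorff distance representatives} the family $\mathfrak{F}_X$ is a disjoint union, indexed by $\mathbf{P}_X$, of ``circles'' $\{F_{\varphi,\lambda}:\lambda\in\mathbb{T}\}$, each isometric to $(\mathbb{T},|\cdot|)$, with any two distinct circles at Hausdorff distance $2$. Since $\widehat{T}$ is an isometry for $d_H$, it transports this picture to $\mathfrak{F}_Y$, mapping the fibre over $\varphi$ onto a metric circle $S_\varphi\subseteq\mathfrak{F}_Y$. The crucial step is to identify each $S_\varphi$ with a single intrinsic fibre of $Y$. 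Here I would use that $Y$ is complex: for any extreme functional $\psi$ supporting a maximal face, multiplication by the unimodular scalar $\mu\overline{\mu'}$ maps $F_{\psi,\mu}$ isometrically onto $F_{\psi,\mu'}$ and moves every point by exactly $|\mu-\mu'|$, so $d_H(F_{\psi,\mu},F_{\psi,\mu'})\le|\mu-\mu'|$; on the other hand $|\psi(x-g)|=2$ for $x\in F_{\psi,\mu}$, $g\in F_{\psi,-\mu}$ forces antipodal faces to sit at distance exactly $2$. Thus $\mu\mapsto F_{\psi,\mu}$ is a $1$-Lipschitz loop into the single circle $S_\varphi$ carrying antipodes to antipodes; a rigidity argument for $(\mathbb{T},|\cdot|)$ then shows it is a rotation onto $S_\varphi$. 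Hence each intrinsic fibre of $Y$ is exactly one circle $S_\varphi$, producing a bijection $\Theta\colon\mathbf{P}_X\to\mathbf{P}_Y$ together with, on each fibre, a surjective isometry of $\mathbb{T}$, that is a rotation $\lambda\mapsto\gamma_\varphi\lambda$ or a conjugate rotation $\lambda\mapsto\gamma_\varphi\overline{\lambda}$.

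With the fibres matched, I would exploit that $M_{\varphi,\alpha}$ is defined only through the distances to the two faces $F_{\varphi,\pm\alpha/|\alpha|}$, which $\widehat{T}$ sends to $F_{\Theta(\varphi),\pm\beta/|\beta|}$, where $\beta=\beta(\alpha)$ is the image of $\alpha$ under the fibre isometry extended radially to $\overline{\mathbb{D}}$ (so $\beta(\alpha)=\gamma_\varphi\alpha$ or $\gamma_\varphi\overline{\alpha}$). As $T$ preserves distances, $T(M_{\varphi,\alpha})=M_{\Theta(\varphi),\beta}$. Now hypothesis $(ii)$ gives $M_{\varphi,\alpha}=\{x\in S(X):\varphi(x)=\alpha\}$, while the always valid inclusion $M_{\Theta(\varphi),\beta}\subseteq\{y\in S(Y):\Theta(\varphi)(y)=\beta\}$ holds on $Y$. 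Feeding these into $T(\{\varphi=\alpha\})=M_{\Theta(\varphi),\beta}$, and using that $T$ is a bijection together with the fact that $|\varphi(x)|=1$ exactly when $x$ belongs to one of the faces $F_{\varphi,\cdot}$ (whose image again lies in a face), a short counting argument upgrades the inclusion to $\Theta(\varphi)(Tx)=\beta(\varphi(x))$ for every $x\in S(X)$.

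Finally I would set $U0=0$ and $Ux=\|x\|\,T(x/\|x\|)$. The previous relations read $\Theta(\varphi)(Ux)=\gamma_\varphi\varphi(x)$ or $\gamma_\varphi\overline{\varphi(x)}$ for all $x\in X$ and all $\varphi\in\mathbf{P}_X$; each of these is real linear in $x$, and since every point of $S(Y)$ lies in some maximal convex subset, the family $\mathbf{P}_Y=\Theta(\mathbf{P}_X)$ separates the points of $Y$, whence $U$ is additive. Since $U$ preserves norms it is bounded, and an additive bounded map between Banach spaces is real linear; as $U$ is moreover norm preserving and onto, it is a surjective real linear isometry extending $T$, so $X$ satisfies the complex Mazur--Ulam property. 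The main obstacle is the fibre-matching of the second paragraph: because the target $Y$ is arbitrary and need not itself satisfy the condition of the Hausdorff distance (for instance two-dimensional $\ell^1$ has maximal faces from different fibres at distance $\sqrt{2}<2$), one must genuinely use the transferred disjoint-circles metric, complex scalar multiplication, and Tingley's antipodal identity to pin down that the metric circles $S_\varphi$ are single intrinsic fibres.
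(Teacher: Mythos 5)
Your proposal is correct, but note that the paper itself offers no proof of Proposition \ref{p Hatori sufficient}: the statement is imported verbatim from \cite[Proposition 4.4]{Hat2021} and used as a black box, so the only meaningful comparison is with Hatori's cited argument, and your blind reconstruction follows essentially his route (which in turn runs in the Mori--Ozawa tradition): transport the face structure through the induced $d_H$-isometry $\widehat{T}\colon\mathfrak{F}_X\to\mathfrak{F}_Y$ (using Tingley's antipodal identity and the fact that surjective sphere isometries carry maximal convex sets to maximal convex sets), match fibres via a bijection $\Theta\colon\mathbf{P}_X\to\mathbf{P}_Y$ and a rotation or conjugate rotation on each fibre, read off $\Theta(\varphi)(Tx)$ from hypothesis $(ii)$ together with the one-sided inclusion $M_{\psi,\beta}\subseteq\{y\in S(Y):\psi(y)=\beta\}$ valid in \emph{every} Banach space, then extend positively homogeneously and prove additivity by separation with $\mathbf{P}_Y$. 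Two steps you leave implicit are exactly the crux and deserve a line each, though both are sound: first, that each intrinsic fibre $\{F_{\psi,\mu}:\mu\in\mathbb{T}\}$ of $Y$ lies in a \emph{single} transported circle follows by chaining, since $d_H(F_{\psi,\mu},F_{\psi,\mu'})\leq|\mu-\mu'|<2$ for close $\mu,\mu'$, while the transported metric from $(i)$ puts faces from distinct circles at distance exactly $2$; second, the rigidity of the resulting $1$-Lipschitz antipode-compatible self-map $g$ of $(\mathbb{T},|\cdot|)$ has a two-line proof via the parallelogram identity, $|g(\mu)-g(\mu')|^2+|g(\mu)+g(\mu')|^2=4=|\mu-\mu'|^2+|\mu+\mu'|^2$, which forces equality in both Lipschitz bounds, and a distance-preserving self-map of a compact metric space is automatically surjective. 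Finally, your ``short counting argument'' is unnecessary: $T(M_{\varphi,\alpha})=M_{\Theta(\varphi),\beta}\subseteq\{y\in S(Y):\Theta(\varphi)(y)=\beta\}$ already yields $\Theta(\varphi)(Tx)=\beta(\varphi(x))$ on each level set with $|\varphi(x)|<1$ (the case $\alpha=0$ included, since the disc computation gives $\psi(y)=0$ regardless of which pair of antipodal faces is used), and the case $|\varphi(x)|=1$ comes directly from the face correspondence; with these remarks your proof is complete and structurally identical to the one the paper cites.
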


\section{Mazur-Ulam property for commutative JB*-triples}\label{sec: abelian JBstar triples}

The aim of this section is to show that every abelian JB$^*$-triple satisfies the Mazur--Ulam property. We shall avoid the axiomatic defition of these objects and we shall simply recall their representation as function spaces. It follows from the Gelfand theory for JB$^*$-triples (see \cite[Corollary 1.11]{Ka83}) that each abelian JB$^*$-triple can be identified with the norm closed subspace of $C_0(L)$ defined by 
$$C_0^\mathbb{T}(L):=\{a\in C_0(L):a(\lambda t)=\lambda a(t)\text{ for every } (\lambda,t)\in\mathbb{T}\times L\},$$
where $L$ is a principal $\mathbb{T}$-bundle $L,$ that is, a subset of a Hausdorff locally convex complex space such that $0 \notin L$, $L \cup \{0\}$ is compact, and $\mathbb{T} L = L$ (see also  \cite{cueto2021exploring}).\smallskip

We can state next the main result of the paper.

\begin{thm} \label{thm: jb-triples main thm}
Let $L$ be a principal $\mathbb{T}$-bundle. Then, $C_0^\mathbb{T}(L)$ satisfies the complex Mazur--Ulam property, that is, for each complex Banach space $X$, every surjective isometry $\Delta:S(C_0^\mathbb{T}(L))\rightarrow S(X)$ admits an extension to a surjective real linear isometry $T:C_0^\mathbb{T}(L)\rightarrow X$.
\end{thm}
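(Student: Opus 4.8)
The plan is to instantiate Proposition~\ref{p Hatori sufficient} for $X=C_0^{\mathbb{T}}(L)$, so that the whole argument reduces to choosing a good set of representatives $\mathbf{P}_X$ and verifying the two hypotheses $(i)$ and $(ii)$. The starting observation is that the defining relation $a(\lambda t)=\lambda a(t)$ forces $\delta_{\lambda t}=\lambda\,\delta_t$ as functionals on $C_0^{\mathbb{T}}(L)$, so an entire circle orbit $\mathbb{T}t$ collapses to a single ray of evaluation functionals. Building on the representation theory of commutative JB$^*$-triples and the description of the extreme points of $\mathcal{B}_{X^*}$ as the scalar multiples $\gamma\delta_t$ ($\gamma\in\mathbb{T}$, $t\in L$) already exploited in \cite{cueto2021exploring}, I would fix a subset $L_0\subseteq L$ meeting each $\mathbb{T}$-orbit exactly once and set $\mathbf{P}_X=\{\delta_t:t\in L_0\}$. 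With this choice each face $F_{\delta_t,\gamma}$ is the set of norm-one functions attaining a prescribed unimodular value at $t$, and as $\gamma$ runs through $\mathbb{T}$ these values sweep out the whole circle; one then checks that $\mathbf{P}_X$ is a genuine set of representatives for $\mathfrak{F}_X$.

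For hypothesis $(i)$ I would use the reduction recorded in \eqref{eq sufficient condition for the Hausdorff distance representatives}: since distinct elements of $\mathbf{P}_X$ correspond to distinct $\mathbb{T}$-orbits, it is enough to show $F_{\delta_t,\lambda}\cap F_{\delta_s,\lambda'}\neq\emptyset$ for $t\neq s$ in $L_0$ and $\lambda,\lambda'\in\mathbb{T}$, i.e. to produce a single norm-one $a\in C_0^{\mathbb{T}}(L)$ with prescribed unimodular values at $t$ and at $s$. As the $\mathbb{T}$-action is free and $L\cup\{0\}$ is compact, the orbit space is locally compact Hausdorff, so the orbits of $t$ and $s$ sit inside disjoint $\mathbb{T}$-invariant tubes; on each tube the principal bundle is trivial, and an equivariant Urysohn function supplies a function equal to the desired phase on each orbit, of modulus at most one throughout, and vanishing off the two tubes. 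This is exactly the kind of equivariant interpolation already available for these spaces, so $(i)$ should be routine.

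The crux is hypothesis $(ii)$. Since the inclusion $M_{\delta_t,\alpha}\subseteq\{x\in S(X):x(t)=\alpha\}$ is automatic, only the reverse inclusion needs proof: starting from $a\in S(X)$ with $a(t)=\alpha=ru$, where $u\in\mathbb{T}$ and $r=|\alpha|<1$, I must exhibit a function $b$ in the face peaking at $t$ with value $u$ and satisfying $\|a-b\|\le 1-r$, together with a function $c$ in the face peaking at $-u$ with $\|a-c\|\le 1+r$. Inside a $\mathbb{T}$-invariant tube around the orbit of $t$, triviality lets me write $a(\lambda,w)=\lambda\,\phi(w)$ with $\phi(w_t)=\alpha$ and $|\phi|\le 1$, and I would define $b$ by the equivariant formula $b(\lambda,w)=\lambda\big[(1-\eta(w))\,\phi(w)+\eta(w)\,u\big]$, where the cutoff $\eta(w)=\min\{\psi(w),\,\tfrac{1-r}{|u-\phi(w)|}\}$ is built from a bump $\psi$ with $\psi(w_t)=1$ supported in the tube. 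Then $b$ is again a norm-one element of $C_0^{\mathbb{T}}(L)$ (a pointwise convex combination of values of modulus at most one), it satisfies $b(t)=u$ because $\eta(w_t)=1$, and $|a-b|=\eta\,|u-\phi|\le 1-r$ everywhere by the very choice of $\eta$; replacing $u$ by $-u$ and $1-r$ by $1+r$ yields $c$. The main obstacle is precisely this last estimate: one must perturb $a$ only locally while forcing the exact peak value and keeping the supremum of the perturbation pinned at $1-|\alpha|$, and it is the equivariant convex combination together with the tailored cutoff $\eta$ that delivers the sharp bound. With $(i)$ and $(ii)$ established, Proposition~\ref{p Hatori sufficient} gives the complex Mazur--Ulam property for $C_0^{\mathbb{T}}(L)$.
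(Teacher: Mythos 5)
Your proof follows the same architecture as the paper's: both run through Hatori's sufficient condition (Proposition~\ref{p Hatori sufficient}) with the set of representatives $\{\delta_t : t\in L_0\}$ for a maximal non-overlapping set $L_0\subset L$, and your verification of hypothesis $(i)$ --- disjoint $\mathbb{T}$-invariant neighbourhoods of two distinct orbits plus equivariant Urysohn functions, combined as $\lambda_1 a_1+\lambda_2 a_2$ --- is exactly the paper's Lemma~\ref{lem:disjoint T-symmetric neighbourhoods} and Corollary~\ref{cor: non-empty intersection faces}, via the reduction \eqref{eq sufficient condition for the Hausdorff distance representatives}. Where you genuinely diverge is hypothesis $(ii)$. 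The paper never trivializes the bundle: it composes $a$ with equivariant functions $g_\varepsilon, h_\varepsilon\in C_0^{\mathbb{T}}(\mathcal{B}_{\mathbb{C}})$ (Lemmas~\ref{lem:normalization is in sphere} and \ref{lem:element close in face}, together with Lemma~3.15 of \cite{cueto2021exploring}) to build two-parameter approximants $c_{r,\varepsilon}$, and only obtains $d\left(a,F_{t_0,\pm\alpha/|\alpha|}\right)\leq 1\mp|\alpha|$ after letting $\varepsilon\to 0$ and $r\to 1$ in Proposition~\ref{prop: equality M C0T}. Your formula $b(\lambda,w)=\lambda\bigl[(1-\eta(w))\phi(w)+\eta(w)u\bigr]$ with $\eta=\min\{\psi,(1-r)/|u-\phi|\}$ is more direct: since $|u-\alpha|=1-r$ and $|u+\alpha|=1+r$ force $\eta(w_t)=1$ (and $\eta$ stays continuous if one reads the quotient as $+\infty$ where $\phi=u$), you exhibit \emph{exact} elements of the two faces at distances $\leq 1-r$ and $\leq 1+r$, with no limiting procedure; your bound for the face at $-u$ is even sharper than the paper's $\varepsilon/2+2-r+r|\alpha|$. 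The handling of $\alpha=0$ via the convention $u=1$ also works, and only $\alpha\in\mathbb{D}$ is needed for Proposition~\ref{p Hatori sufficient}, so omitting $|\alpha|=1$ is harmless (the paper treats it anyway).

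The one step you must shore up is the assertion that ``on each tube the principal bundle is trivial.'' In Kaup's definition a principal $\mathbb{T}$-bundle is merely a $\mathbb{T}$-invariant subset $L$ of a Hausdorff locally convex space with $0\notin L$ and $L\cup\{0\}$ compact; local triviality is not part of the definition, and the paper's composition-with-equivariant-functions technique is designed precisely to avoid invoking it. The claim is nevertheless true and fixable: either quote Gleason's slice theorem for free actions of compact Lie groups on completely regular spaces, or argue elementarily --- choose by Hahn--Banach a continuous linear functional $\xi$ on the ambient space with $\xi(t_0)=1$; then on the $\mathbb{T}$-invariant open set $\{t\in L:\xi(t)\neq 0\}$ the map $t\mapsto \bigl(\xi(t)/|\xi(t)|,\,(|\xi(t)|/\xi(t))\,t\bigr)$ is an equivariant homeomorphism onto $\mathbb{T}\times S$ with $S=\{t:\xi(t)>0\}$, which supplies the trivialization and the invariant bump $\psi$ your construction needs. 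With that justification added, your argument is complete and constitutes a legitimate alternative proof of the key Proposition~\ref{prop: equality M C0T}.
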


The proof will be obtained after a series of technical results witnessing that every function space of the form $C_0^\mathbb{T}(L)$ satisfies the hypotheses of Proposition \ref{p Hatori sufficient}. Before dealing with the technical details, we shall present an  interesting corollary.\smallskip

Although for each locally compact space $\tilde{L}$, the Banach space $C_0(\tilde{L})$ is isometrically isomorphic to a $C^{\mathbb{T}}_0 (L)$ space (cf. \cite[Proposition 10]{Ol74}), there exist principal $\mathbb{T}$-bundles $L$ for which the space $C^{\mathbb{T}}_0 (L)$ is not isometrically isomorphic to a $C_0(L)$ space (cf. \cite[Corollary 1.13 and subsequent comments]{Ka83}). So, there exist abelian JB$^*$-triples which are not isometrically isomorphic to commutative C$^*$-algebras. The next corollary is a weaker consequence of our previous theorem. 

\begin{cor}\label{c abelian Cstar algebras satisfy the MUP} Every abelian C$^*$-algebra {\rm(}that is, every $C_0(\tilde{L})$ space{\rm)} satisfies the complex Mazur--Ulam property. 
\end{cor}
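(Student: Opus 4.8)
The plan is to verify that $X=C_0^{\mathbb{T}}(L)$ meets the two hypotheses of Proposition~\ref{p Hatori sufficient} for a suitable set of representatives $\mathbf{P}_X$, and then invoke that proposition. The first task is to describe the facial structure. For $t\in L$ the evaluation $\delta_t$ restricts to a norm-one functional on $C_0^{\mathbb{T}}(L)$, and the defining equivariance $a(\lambda t)=\lambda a(t)$ says exactly that $\delta_{\lambda t}=\lambda\,\delta_t$ on $C_0^{\mathbb{T}}(L)$. As $C_0^{\mathbb{T}}(L)\subseteq C_0(L)$, the extreme points of $\mathcal B_{X^*}$ giving maximal convex subsets of $S(X)$ are restrictions of the $\lambda\delta_t$, and two of them are $\sim$-equivalent precisely when the underlying points share a $\mathbb{T}$-orbit. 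Hence, choosing a cross-section $\Lambda\subseteq L$ of the orbit map $L\to L/\mathbb{T}$, the family $\mathbf{P}_X=\{\delta_t:t\in\Lambda\}$ is a set of representatives for $\mathfrak F_X$, with $F_{\delta_t,\gamma}$ consisting of the norm-one $a$ peaking at $t$ with value $\gamma$. Throughout I would use two structural facts: $L/\mathbb{T}$ is locally compact Hausdorff, and, $L$ being a principal $\mathbb{T}$-bundle, the orbit map admits continuous local sections over neighbourhoods of orbits.

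For hypothesis (i), the reduction \eqref{eq sufficient condition for the Hausdorff distance representatives} lets me replace the whole condition of the Hausdorff distance by the single requirement that $F_{\delta_t,\lambda}\cap F_{\delta_{t'},\lambda'}\neq\emptyset$ whenever $[t]\neq[t']$ and $\lambda,\lambda'\in\mathbb{T}$, the remaining entries of \eqref{eq:condition hausdorff distance representatives} then following from \cite[Lemma 3.1]{Hat2021}. I would exhibit a common element by an equivariant Urysohn construction. Pick disjoint open neighbourhoods $U,U'$ of $[t],[t']$ in $L/\mathbb{T}$, continuous local sections $s,s'$ over them with $s([t])=t$ and $s'([t'])=t'$, and compactly supported functions $0\le\phi,\phi'\le 1$ on $U,U'$ with $\phi([t])=\phi'([t'])=1$. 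Defining $a(\mu s(u))=\mu\lambda\,\phi(u)$ on $\pi^{-1}(U)$, $a(\mu s'(u'))=\mu\lambda'\,\phi'(u')$ on $\pi^{-1}(U')$, and $a\equiv 0$ elsewhere produces an element of $C_0^{\mathbb{T}}(L)$ of norm one with $a(t)=\lambda$, $a(t')=\lambda'$; it therefore lies in both faces.

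For hypothesis (ii), the inclusion $M_{\delta_t,\alpha}\subseteq\{x\in S(X):\delta_t(x)=\alpha\}$ is supplied by \cite[Lemma 4.3]{Hat2021}, so the content is the reverse inclusion. Given $a\in S(X)$ with $a(t)=\alpha$, $0<|\alpha|<1$, and writing $\omega=\alpha/|\alpha|$, I must find $b_+\in F_{\delta_t,\omega}$ with $\|a-b_+\|\le 1-|\alpha|$ and $b_-\in F_{\delta_t,-\omega}$ with $\|a-b_-\|\le 1+|\alpha|$. Using a local section $s$ near the orbit of $t$ I write $a(\mu s(u))=\mu g(u)$ with $g([t])=\alpha$, and I modify $a$ only near that orbit through the convex interpolation $b_\pm(\mu s(u))=\mu\big((1-\phi(u))\,g(u)+\phi(u)\,(\pm\omega)\big)$, where $\phi$ peaks at $[t]$ and is supported in a small set on which $|g|<1$. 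This keeps $|b_\pm|\le 1$, forces $b_\pm(t)=\pm\omega$ (so $b_\pm\in F_{\delta_t,\pm\omega}$), and gives $|a-b_\pm|=\phi\,|(\pm\omega)-g|$, a quantity equal to $1\mp|\alpha|$ at the orbit of $t$ and, by continuity of $g$, at most $1\mp|\alpha|+\e$ on a small enough support. Shrinking the support yields $d(a,F_{\delta_t,\pm\omega})\le 1\mp|\alpha|$; combined with the trivial lower bound $\|a-b\|\ge|\delta_t(a-b)|=1\mp|\alpha|$ for every $b$ in the relevant face, this gives equality and hence $a\in M_{\delta_t,\alpha}$. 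The case $\alpha=0$ is handled by the same interpolation towards $\pm 1$.

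The main obstacle I anticipate is that every one of these constructions must live inside $C_0^{\mathbb{T}}(L)$, not merely in $C_0(L)$: the $\mathbb{T}$-equivariance forces each bump, truncation and interpolation to be performed compatibly with the circle action, which is exactly why passing to the orbit space $L/\mathbb{T}$ and using local sections is indispensable. The most delicate quantitative point is the sup-norm control in hypothesis (ii): the optimal distances $1\mp|\alpha|$ are only attained in the limit of shrinking supports, and it is the convex-combination form of $b_\pm$ that keeps $|b_\pm|\le 1$ for free and reduces everything to an $\e$-level continuity estimate for $g$ about the orbit of $t$.
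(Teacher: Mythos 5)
Your proposal does not reproduce the paper's proof of this corollary, which is essentially one line: by Olsen's result \cite[Proposition 10]{Ol74}, every $C_0(\tilde{L})$ is isometrically isomorphic to a $C_0^{\mathbb{T}}(L)$ space (concretely one may take $L=\mathbb{T}\times\tilde{L}$ with the tautological circle action, via $a\mapsto a(1,\cdot)$), so Theorem \ref{thm: jb-triples main thm} applies. What you do instead is re-prove the main theorem itself, and by a genuinely different route: where the paper works intrinsically with $\mathbb{T}$-symmetric neighbourhoods (Remark \ref{rmk:T-symmetric neighbourhood}), an equivariant Urysohn lemma (Remark \ref{rmk:T-symmetric Urysohn}), and compositions $g_\varepsilon\circ a$, $h_\varepsilon\circ a$ with equivariant scalar functions (Lemmas \ref{lem:normalization is in sphere} and \ref{lem:element close in face}, which only achieve the bound $2-r+r|\alpha|$ and need the limit $r\to 1$ in Proposition \ref{prop: equality M C0T}), you pass to the orbit space $L/\mathbb{T}$ and pull constructions back through local sections; your one-step convex interpolation $b_\pm=(1-\phi)\,a+\phi\cdot(\pm\omega)\cdot(\text{equivariant bump})$ reaches the distances $1\mp|\alpha|+\varepsilon$ directly and is quantitatively cleaner than the paper's Lemma \ref{lem:element close in face}. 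Your verification of hypothesis (i) via disjoint bumps on the quotient is likewise a correct substitute for Lemma \ref{lem:disjoint T-symmetric neighbourhoods} and Corollary \ref{cor: non-empty intersection faces}.

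There are, however, two genuine gaps. First, as a proof of the stated corollary your argument is incomplete: the corollary concerns $C_0(\tilde{L})$, and you never connect $C_0(\tilde{L})$ to a $C_0^{\mathbb{T}}(L)$ space; that identification (Olsen's theorem, or simply the trivial bundle $L=\mathbb{T}\times\tilde{L}$) is the \emph{entire} content of the paper's deduction and cannot be omitted. Second, your ``structural fact'' that the orbit map admits continuous local sections is not part of the definition of principal $\mathbb{T}$-bundle used here ($0\notin L$, $L\cup\{0\}$ compact, $\mathbb{T}L=L$): no local triviality is assumed. The claim is true --- the action is free (if $\lambda t=t$ with $t\neq 0$ then $\lambda=1$), and $L$ is completely regular as a subspace of a Hausdorff locally convex space, so Gleason's theorem on free actions of compact Lie groups provides local cross-sections --- but this nontrivial input must be invoked, and avoiding it is precisely why the paper argues with $\mathbb{T}$-symmetric sets and equivariant compositions instead. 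For the corollary itself the bundle $\mathbb{T}\times\tilde{L}$ is trivial and admits a global section, so once you insert the identification step your section-based constructions become unconditional. Finally, your Milman-style sketch that all maximal faces are of the form $F_{\delta_t,\gamma}$ with $t$ in a cross-section is the content of \cite[Lemma 3.5]{cueto2021exploring}, which the paper cites rather than re-derives; you should either cite it or expand that sketch into a proof.
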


Compared with previous results, we observe that as a consequence of the result proved by O. Hatori for uniform algebras in \cite[Theorem 4.5]{Hat2021} every unital abelian C$^*$-algebra satisfies the complex Mazur--Ulam property. Actually, all unital C$^*$-algebras enjoy the Mazur--Ulam property \cite{MoriOza2018}. In the case of real-valued continuous functions, R. Liu proved that for each compact Hausdorff space $K$, $C(K, \mathbb{R})$ satisfies the Mazur--Ulam property (see \cite[Corollary 6]{Liu2007}). Previous attempts were conducted by G. Ding \cite{Ding2003} and X.N. Fang and J.H. Wang \cite {FangWang06}.\smallskip

Let $\tilde{L}$ be a locally compact Hausdorff space. A closed subspace $E$ of $C_0(\tilde{L},\mathbb{K}),$ separates the points of $\tilde{L}$ if for any $t_1\neq t_2$ in $\tilde{L}$ there exists a function $a\in E$ such that $a(t_1) \neq a(t_2)$. Following \cite{Hat2021}, we shall say that $E$ satisfies
the condition $(r)$ if for any $t$ in the Choquet boundary of $E$, each neighborhood $V$ of $t$, and $\varepsilon > 0$ there exists $u \in E$ such that $0 \leq  u \leq 1 = u(t)$ on $\tilde{L}$ and $0 \leq u \leq \varepsilon$ on $\tilde{L}\backslash V$. The proof of Corollary 5.4 in the preprint version of \cite{Hat2021} affirms that each closed subspace $E$ of $C_0(\tilde{L},\mathbb{R})$ separating the points of $\tilde{L}$ and satisfying a stronger assumption than condition $(r)$ has the Mazur--Ulam property. After some private communications with O. Hatori we actually learned that property $(r)$ is enough to conclude that any such closed subspace $E$ satisfies the Mazur--Ulam property. Actually the desired conclusion can be derived from \cite[Theorem 2.4]{BKMW} by just observing that conditon $(r)$ implies that the isometric identification of $E$ in $C_0\left(\overline{Ch(E)}\right)$ is C-rich, and hence a lush space. Corollary 3.9 in \cite{THL} implies that $E$ has the Mazur--Ulam property.\smallskip

We focus now on the main goal of this section.  Henceforth, let $L$ be a principal $\mathbb{T}$-bundle and $L_0\subset L$ a maximal non-overlapping set, that is, $L_0$ is maximal satisfying that for each $t\in L_0$ we have $L_0\cap \mathbb{T} t = \{t\}$ (its existence is guaranteed by Zorn's lemma).\smallskip

Assume that a Banach space $Y$ satisfies the following property: for every extreme point $\varphi\in\partial_e(\mathcal{B}_{Y^*})$, the set $\{\varphi\}$ is a weak$^*$-semi-exposed face of $\mathcal{B}_{Y^*}$. It is clear that each extreme point $\varphi\in\partial_e(\mathcal{B}_{Y^*})$ is determined by the set $\{\varphi\}_{\prime}=\varphi^{-1}(1)\cap S(Y)$. Hence, the equivalence relation $\sim$ defined in Section \ref{sec: preliminaries} (cf. \cite[Definition 2.1]{Hat2021}) can be characterized in the following terms: for $\varphi,\psi\in\partial_e(\mathcal{B}_{Y^*})$, we have 
\begin{equation*}
    \varphi\sim\psi\Longleftrightarrow \varphi=\gamma\psi \hbox{ for some $\gamma\in\mathbb{T}$.}
\end{equation*} Since $Y= C_0^\mathbb{T}(L)$ satisfies the mentioned property, the set $\{\delta_{t}:t\in L_0\}$ is a set of representatives for the relation $\sim$. We know that each maximal proper face $F$ of the closed unit ball of $C_0^\mathbb{T}(L)$ is of the form $$F=F_{\delta_{t_0},\lambda}= F_{t_0,\lambda}:=\{a\in S(C_0^\mathbb{T}(L)):\delta_{t_0}(a)=a(t_0)=\lambda\}$$ for some $(t_0,\lambda)\in L_0\times\mathbb{T}$ (cf. \cite[Lemma 3.5]{cueto2021exploring}).\smallskip

The next remark, which has been borrowed from \cite[Remark 3.4]{cueto2021exploring} states a kind of Urysohn's lemma for the space $C_0^\mathbb{T}(L)$. 

\begin{remark}\label{rmk:T-symmetric Urysohn}{\rm\cite[Remark 3.4]{cueto2021exploring}}{\rm
Suppose $L$ is a principal $\mathbb{T}$-bundle. Let $W$ be a $\mathbb{T}$-invariant open neighbourhood of $t_0$ in $L$ which is contained in a compact $\mathbb{T}$-invariant subset. Then, there exists a function $h\in S(C_0^\mathbb{T}(L))$ satisfying $h(t_0)=1$ and $h(t)=0$ for all $t\in X\backslash W$. }
\end{remark}

We recall next, for later purposes, the following version of \cite[Theorem 2.7]{rud} for $\mathbb{T}$-symmetric compact subsets of principal $\mathbb{T}$-bundles.

\begin{remark}\label{rmk:T-symmetric neighbourhood}{\rm Let $L$ be a principal $\mathbb{T}$-bundle. Let $K$ be a $\mathbb{T}$-symmetric compact subset of $L$ contained in an open $\mathbb{T}$-symmetric set $U\subset L$. Then, there exits an open $\mathbb{T}$-symmetric set $V$ with compact $\mathbb{T}$-symmetric closure such that
$$K\subset V\subset \overline{V}\subset U.$$ The proof follows straightforwardly from \cite[Theorem 2.7]{rud} by just observing that for each open (respectively, closed or compact) subset $\mathcal{O}\subset L,$ the set $\mathbb{T} \mathcal{O}$ is open (respectively, closed or compact).}
\end{remark}

We can now begin with the technical details for our arguments. 

\begin{lem}\label{lem:disjoint T-symmetric neighbourhoods} If $t_1\neq t_2$ in $L_0$, then there exist open $\mathbb{T}$-symmetric subsets $V_1,V_2\subset L$ satisfying:
\begin{enumerate}[$\bullet$]
    \item $\mathbb{T}t_j\subset V_j$ for $j=1,2$;
    \item $\overline{V_j}$ is compact for $j=1,2$;
    \item $V_1\cap V_2=\overline{V_1}\cap\overline{V_2}=\emptyset$.
\end{enumerate}
\end{lem}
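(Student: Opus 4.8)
The plan is to reduce the statement to a separation argument in the locally compact Hausdorff space $L$ and then to promote the separating sets to $\mathbb{T}$-symmetric ones by means of Remark \ref{rmk:T-symmetric neighbourhood}. First I would record the two basic facts about the orbits $\mathbb{T}t_1$ and $\mathbb{T}t_2$. They are \emph{disjoint}: if $\lambda t_1 = \mu t_2$ for some $\lambda,\mu\in\mathbb{T}$, then $t_1 = \overline{\lambda}\mu\, t_2 \in \mathbb{T}t_2$, so $t_1 \in L_0\cap \mathbb{T}t_2 = \{t_2\}$ by the non-overlapping property of $L_0$, contradicting $t_1\neq t_2$. They are also \emph{compact} and $\mathbb{T}$-symmetric, being images of the compact group $\mathbb{T}$ under the continuous maps $\lambda\mapsto \lambda t_j$.

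Since $L\cup\{0\}$ is compact Hausdorff and $\{0\}$ is closed, $L$ is an open subset of a compact Hausdorff space, hence locally compact and Hausdorff; in particular the two disjoint compact sets $\mathbb{T}t_1,\mathbb{T}t_2$ can be enclosed in disjoint open sets $W_1,W_2\subset L$ with $\mathbb{T}t_j\subset W_j$. The main obstacle is precisely that these $W_j$ need not be $\mathbb{T}$-symmetric, and replacing them by their $\mathbb{T}$-saturations $\mathbb{T}W_j$ would in general destroy disjointness. To get around this I would exploit the continuity of the action map $a:\mathbb{T}\times L\to L$, $a(\lambda,t)=\lambda t$. The preimage $a^{-1}(W_j)$ is open and contains the slice $\mathbb{T}\times\{t_j\}$, so the tube lemma (here is where compactness of $\mathbb{T}$ is used) yields an open neighbourhood $O_j$ of $t_j$ with $\mathbb{T}\times O_j\subset a^{-1}(W_j)$, i.e.\ $\mathbb{T}O_j\subset W_j$. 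Setting $U_j:=\mathbb{T}O_j$ produces $\mathbb{T}$-symmetric open sets (each $\lambda O_j$ is open since $t\mapsto\lambda t$ is a homeomorphism, and the union over $\lambda\in\mathbb{T}$ is $\mathbb{T}$-invariant) with $\mathbb{T}t_j\subset U_j\subset W_j$; because $W_1\cap W_2=\emptyset$ we obtain $U_1\cap U_2=\emptyset$.

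Finally I would apply Remark \ref{rmk:T-symmetric neighbourhood} to each inclusion $\mathbb{T}t_j\subset U_j$ of a $\mathbb{T}$-symmetric compact set into an open $\mathbb{T}$-symmetric set, obtaining open $\mathbb{T}$-symmetric sets $V_j$ with compact $\mathbb{T}$-symmetric closure satisfying $\mathbb{T}t_j\subset V_j\subset \overline{V_j}\subset U_j$. This delivers the first two bulleted properties at once. For the third, the inclusions $\overline{V_j}\subset U_j$ combined with $U_1\cap U_2=\emptyset$ force $\overline{V_1}\cap\overline{V_2}=\emptyset$, and hence also $V_1\cap V_2=\emptyset$, completing the construction.
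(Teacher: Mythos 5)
Your proof is correct, but it takes a genuinely different route from the paper's. The paper never separates the two orbits by disjoint open sets and never invokes the tube lemma: it applies Remark \ref{rmk:T-symmetric neighbourhood} twice \emph{in sequence}. Since $\mathbb{T}t_1$ and $\mathbb{T}t_2$ are disjoint compact sets, $L\setminus\mathbb{T}t_2$ is an open $\mathbb{T}$-symmetric set containing $\mathbb{T}t_1$, so the Remark yields $V_1$ with $\mathbb{T}t_1\subset V_1\subset\overline{V_1}\subset L\setminus\mathbb{T}t_2$; then $L\setminus\overline{V_1}$ is open, $\mathbb{T}$-symmetric and contains $\mathbb{T}t_2$, and a second application gives $V_2$ with $\overline{V_2}\subset L\setminus\overline{V_1}$, making disjointness of the closures automatic. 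Your parallel scheme instead manufactures disjoint $\mathbb{T}$-symmetric open sets $U_1,U_2$ first, and that is where you need the extra machinery: Hausdorff separation of disjoint compacta, continuity of the action $\mathbb{T}\times L\to L$, and the tube lemma to saturate a neighbourhood of $t_j$ inside $W_j$ (note that local compactness of $L$, which you mention, is not actually needed for the separation step). All of this is sound, and the tube-lemma saturation is the standard, reusable technique for producing invariant neighbourhoods under a compact group action; but it is strictly more than the problem requires, because the observation driving the paper's argument is simply that the complement of a $\mathbb{T}$-symmetric closed set is again $\mathbb{T}$-symmetric and open, so Remark \ref{rmk:T-symmetric neighbourhood} alone can do all the separating. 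In short: both proofs hinge on the same key lemma, yours buys a more general and transferable construction, the paper's buys brevity.
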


\begin{proof}
    Since $L_0$ is non-overlapping, we know that $\mathbb{T}t_1$ and $\mathbb{T}t_2$ are disjoint compact subsets of $L$. Hence
    $$\mathbb{T}t_1\subset L\backslash \mathbb{T}t_2,$$
    where $L\backslash \mathbb{T}t_2$ is $\mathbb{T}$-symmetric and open. By Remark \ref{rmk:T-symmetric neighbourhood}, there exists a $\mathbb{T}$-symmetric open set $V_1\subset L$ with $\mathbb{T}$-symmetric compact closure satisfying
    $$\mathbb{T}t_1\subset V_1\subset\overline{V_1}\subset L\backslash \mathbb{T}t_2.$$
    
    Now, having in mind that $\overline{V_1}$ is $\mathbb{T}$-symmetric and compact with $\overline{V_1}\cap\mathbb{T}t_2=\emptyset$, we deduce that $L\backslash\overline{V_1}$ is an open $\mathbb{T}$-symmetric set containing $\mathbb{T}t_2$. We can find, again via Remark \ref{rmk:T-symmetric neighbourhood}, an open $\mathbb{T}$-symmetric subset $V_2$ with $\mathbb{T}$-symmetric compact closure satisfying
    $$\mathbb{T}t_2\subset V_2\subset\overline{V_2}\subset L\backslash\overline{V_1}.$$
\end{proof}

\begin{cor} \label{cor: non-empty intersection faces} Let $t_1\neq t_2$ in $L_0$ and $\lambda_1,\lambda_2\in\mathbb{T}$. Then, there exist a function $a\in S(C_0^\mathbb{T}(L))$ such that $a(t_j)=\lambda_j$ for $j=1,2$.
\end{cor}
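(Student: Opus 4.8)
The plan is to combine the disjoint neighbourhoods produced by Lemma \ref{lem:disjoint T-symmetric neighbourhoods} with the $\mathbb{T}$-symmetric Urysohn lemma of Remark \ref{rmk:T-symmetric Urysohn}, and then build $a$ as a scaled sum of two bump functions with disjoint supports. Concretely, I would first invoke Lemma \ref{lem:disjoint T-symmetric neighbourhoods} to obtain open $\mathbb{T}$-symmetric sets $V_1,V_2\subset L$ with $\mathbb{T}t_j\subset V_j$, compact closures $\overline{V_j}$, and $V_1\cap V_2=\emptyset$. Since each $V_j$ is a $\mathbb{T}$-invariant open neighbourhood of $t_j$ sitting inside the compact $\mathbb{T}$-invariant set $\overline{V_j}$, Remark \ref{rmk:T-symmetric Urysohn} applies and yields functions $h_1,h_2\in S(C_0^\mathbb{T}(L))$ with $h_j(t_j)=1$ and $h_j\equiv 0$ on $L\backslash V_j$.

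Next I would define $a:=\lambda_1 h_1+\lambda_2 h_2$. Because $C_0^\mathbb{T}(L)$ is a linear subspace and $h_1,h_2$ lie in it, we have $a\in C_0^\mathbb{T}(L)$. The value conditions are immediate from the disjointness $V_1\cap V_2=\emptyset$: since $t_1\in V_1$ forces $t_1\notin V_2$, we get $h_2(t_1)=0$, hence $a(t_1)=\lambda_1 h_1(t_1)+\lambda_2 h_2(t_1)=\lambda_1$, and symmetrically $a(t_2)=\lambda_2$.

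It remains to verify $\|a\|=1$, and here the disjointness of the supports is exactly what makes the norm computation clean: at any $t\in V_1$ we have $h_2(t)=0$, so $|a(t)|=|\lambda_1||h_1(t)|=|h_1(t)|\leq 1$; symmetrically $|a(t)|=|h_2(t)|\leq 1$ on $V_2$; and $a(t)=0$ elsewhere. Thus $\|a\|_\infty=\max\{\|h_1\|_\infty,\|h_2\|_\infty\}=1$, using $|\lambda_j|=1$ and that each $h_j$ is a unit vector. The only place demanding care is this last step, where one must note that no cancellation or addition across the two supports can occur precisely because $V_1$ and $V_2$ are disjoint; this is the sole (mild) obstacle, and it is fully resolved by the third bullet of Lemma \ref{lem:disjoint T-symmetric neighbourhoods}. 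Hence $a\in S(C_0^\mathbb{T}(L))$ with the required values, which also witnesses that $F_{t_1,\lambda_1}\cap F_{t_2,\lambda_2}\neq\emptyset$, the intersection property \eqref{eq sufficient condition for the Hausdorff distance representatives} needed later.
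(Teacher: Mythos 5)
Your proof is correct and follows essentially the same route as the paper: invoke Lemma \ref{lem:disjoint T-symmetric neighbourhoods} for disjoint $\mathbb{T}$-symmetric neighbourhoods with compact closures, apply Remark \ref{rmk:T-symmetric Urysohn} to obtain the two bump functions, and set $a=\lambda_1 h_1+\lambda_2 h_2$, with disjointness of supports giving both the point values and $\|a\|=1$. Your norm verification is just a slightly more explicit version of the paper's observation that $0\in\{a_1(t),a_2(t)\}$ for every $t\in L$.
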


\begin{proof} Lemma \ref{lem:disjoint T-symmetric neighbourhoods} assures the existence of disjoint open $\mathbb{T}$-symmetric neighbourhoods with compact closure $W_1$ and $W_2$ of $t_1$ and $t_2,$ respectively. By Remark \ref{rmk:T-symmetric Urysohn}, there exist functions $a_1,a_2\in S(C_0^\mathbb{T}(L))$ such that $a_j(t_j)=1$ and $a_j\raisebox{-1ex}{$|$}_{L\backslash W_j}\equiv 0$ for $j=1,2$.\smallskip

Since $W_1$ and $W_2$ are disjoint, we have $0\in\{a_1(t),a_2(t)\}$ for each $t\in L$. Hence, $a:=\lambda_1 a_1+\lambda_2 a_2$ lies in $S(C_0^\mathbb{T}(L))$ and satisfies the required conclusion.
\end{proof}

\begin{remark}\label{r abelian JB*-triples satisfy the condition of the Hausdorff distance} {\rm The previous corollary shows that $F_{t_1,\lambda}\cap F_{t_2,\lambda'}\neq\emptyset$ for any $t_1\neq t_2$ in $L_0$ and $\lambda,\lambda'\in\mathbb{T}$. Therefore, the space $C_0^\mathbb{T}(L)$ satisfies \eqref{eq sufficient condition for the Hausdorff distance representatives} and hence the condition of the Hausdorff distance (cf. \eqref{eq:condition hausdorff distance} and \eqref{eq sufficient condition for the Hausdorff distance representatives} or \cite[Lemma 3.1]{Hat2021}).}
\end{remark}


We shall next show that $C^\mathbb{T}_0(L)$ satisfies the second hypothesis in Proposition \ref{p Hatori sufficient}.\smallskip

Along this note the set $ \mathcal{B}_{\mathbb{C}}\backslash\{0\}$ will be regarded as a principal $\mathbb{T}$-bundle, and we shall write  $C^\mathbb{T}_0(\mathcal{B}_{\mathbb{C}})$ for the abelian JB$^*$-triple associated with this principal $\mathbb{T}$-bundle, that is, $$ C^\mathbb{T}_0(\mathcal{B}_{\mathbb{C}}) = \left\{ f\in C_0(\mathcal{B}_{\mathbb{C}}\backslash\{0\}) : f(0) =0, \ f(\lambda t) = \lambda f(t), \ \forall t\in \mathcal{B}_{\mathbb{C}}, \ \lambda\in \mathbb{T}  \right\}.$$

\begin{lem} \label{lem:normalization is in sphere}
For each non-zero $a\in \mathcal{B}_{C^\mathbb{T}_0(L)}$ and each $0<\varepsilon<\|a\|\leq 1$, there exists a function $u_\varepsilon\in S(C^\mathbb{T}_0(L))$ satisfying
\[u_\varepsilon(s)=\frac{a(s)}{|a(s)|}\]
for every $s\in L$ with $|a(s)|\geq\varepsilon$.
\end{lem}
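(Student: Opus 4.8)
The plan is to avoid any Urysohn-type patching and instead write down $u_\varepsilon$ by an explicit continuous renormalization of $a$ that is already frozen at modulus one on the region $\{|a|\ge\varepsilon\}$. Concretely, I would take
\[
u_\varepsilon:=\frac{a}{\max(|a|,\varepsilon)},
\]
where $\max(|a|,\varepsilon)$ is the real-valued function $s\mapsto\max(|a(s)|,\varepsilon)$. The defining property is then immediate: on the closed set $K_\varepsilon:=\{s\in L:|a(s)|\ge\varepsilon\}$ the denominator equals $|a(s)|$, so $u_\varepsilon(s)=a(s)/|a(s)|$ exactly as demanded.

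Next I would verify that $u_\varepsilon$ genuinely lands in $S(C_0^\mathbb{T}(L))$, which splits into three routine checks. For continuity, note $\max(|a|,\varepsilon)\ge\varepsilon>0$ is continuous and nowhere vanishing, so $u_\varepsilon$ is continuous. For the $\mathbb{T}$-homogeneity, since $|a(\lambda t)|=|a(t)|$ the denominator is $\mathbb{T}$-invariant, while $a(\lambda t)=\lambda a(t)$ makes the numerator $\mathbb{T}$-homogeneous, so $u_\varepsilon(\lambda t)=\lambda\,u_\varepsilon(t)$ for all $(\lambda,t)\in\mathbb{T}\times L$. For the norm, on $K_\varepsilon$ one has $|u_\varepsilon|=1$, and where $|a(s)|<\varepsilon$ one has $|u_\varepsilon(s)|=|a(s)|/\varepsilon<1$; because $\|a\|>\varepsilon$ forces $K_\varepsilon\neq\emptyset$, this gives $\|u_\varepsilon\|=1$.

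The one point that deserves care, rather than just formula manipulation, is the vanishing-at-infinity condition: dividing by $\max(|a|,\varepsilon)$ keeps the denominator bounded away from $0$, so one must confirm this cannot undo the decay of $a$. I would handle this through super-level sets: for $0<\eta\le 1$ the inclusion $\{|u_\varepsilon|\ge\eta\}\subseteq\{|a|\ge\eta\varepsilon\}$ holds (splitting into the cases $s\in K_\varepsilon$ and $s\notin K_\varepsilon$), while for $\eta>1$ the left-hand set is empty since $|u_\varepsilon|\le 1$. As $a\in C_0(L)$, the set $\{|a|\ge\eta\varepsilon\}$ is compact, and $\{|u_\varepsilon|\ge\eta\}$ is a closed subset of it, hence compact; therefore $u_\varepsilon\in C_0(L)$. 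Combining the three checks yields $u_\varepsilon\in S(C_0^\mathbb{T}(L))$ with the required value on $K_\varepsilon$, so no appeal to the $\mathbb{T}$-symmetric Urysohn lemma of Remark~\ref{rmk:T-symmetric Urysohn} is needed; the construction is global and entirely explicit.
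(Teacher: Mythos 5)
Your proof is correct and is essentially the paper's argument in disguise: the paper defines $u_\varepsilon=h_\varepsilon\circ a$ for a piecewise-affine gauge $h_\varepsilon\in C^\mathbb{T}_0(\mathcal{B}_\mathbb{C})$ equal to $0$ on $[0,\varepsilon/2]$ and $1$ on $[\varepsilon,1]$, and your formula $a/\max(|a|,\varepsilon)$ is exactly the analogous composition with the gauge $t\mapsto\min(t/\varepsilon,1)$ extended $\mathbb{T}$-homogeneously (so the paper's proof, like yours, makes no appeal to the Urysohn-type Remark~\ref{rmk:T-symmetric Urysohn}). The only divergence is in bookkeeping: the paper inherits membership in $S(C^\mathbb{T}_0(L))$ from the functional calculus for $C^\mathbb{T}_0(\mathcal{B}_\mathbb{C})$, whereas you verify continuity, $\mathbb{T}$-homogeneity, vanishing at infinity (your super-level-set argument is valid, and also follows at once from the pointwise bound $|u_\varepsilon|\le|a|/\varepsilon$) and the norm by hand.
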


\begin{proof} Let us observe that each function $f\in C^\mathbb{T}_0(\mathcal{B}_{\mathbb{C}})$ is uniquely determined by its values on $[0,1].$ Consider the function $h_\varepsilon\in  C^\mathbb{T}_0(\mathcal{B}_{\mathbb{C}})$ defined on $[0,1]$ by
\begin{equation*}
    h_\varepsilon(t)=\begin{cases}
    0,\hspace{11.5mm} t\leq\varepsilon/2, \\
    \text{affine},\quad \varepsilon/2<t<\varepsilon, \\
    1,\hspace{11.5mm} t\geq\varepsilon.
    \end{cases}
\end{equation*}
Since $\|a\|>\varepsilon$, the function $(h_\varepsilon)_t(a)=h\circ a$ lies in $S(C^\mathbb{T}_0(L))$.\smallskip

We also know that for each $s\in L$ such that $|a(s)|\geq\varepsilon$ we have
\begin{align*}
    u_\varepsilon(s)=h_\varepsilon\big(a(s)\big)=h_\varepsilon\Big(\frac{a(s)}{|a(s)|}|a(s)|\Big)=\frac{a(s)}{|a(s)|}h_\varepsilon(|a(s)|)=\frac{a(s)}{|a(s)|}\cdot 1,
\end{align*}
which concludes the proof.
\end{proof}

The next technical result is the key tool for our purposes.

\begin{lem} \label{lem:element close in face}
Let $t_0\in L$. Suppose that $a\in S(C^\mathbb{T}_0(L))$ satisfies $0<|a(t_0)|<1$.
Then, for each $\varepsilon>0$ with $\varepsilon<\min\{|a(t_0)|,1-|a(t_0)|\}$, there exist $a_\varepsilon, b_\varepsilon\in S(C^\mathbb{T}_0(L))$ such that $\|a-a_\varepsilon\|\leq\frac{\varepsilon}{2},$ and for each $0<r<1$ the function
\begin{equation*}
    c_{r,\varepsilon}:=r a_\varepsilon-(1+r|a(t_0)|) \, b_\varepsilon
\end{equation*}
lies in $F_{t_0,-\frac{a(t_0)}{|a(t_0)|}}$ with   $\|a_\varepsilon-c_{r,\varepsilon}\|\leq 2-r+r|a(t_0)|$.
\end{lem}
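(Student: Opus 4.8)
The plan is to construct $a_\varepsilon$ and $b_\varepsilon$ by a $\mathbb{T}$-equivariant ``flattening'' of $a$ on a small invariant neighbourhood of the orbit $\mathbb{T}t_0$. Write $\rho:=|a(t_0)|\in(0,1)$ and let $w:=a/|a|$ denote the phase of $a$, a continuous unimodular function defined wherever $a\neq 0$, satisfying $w(\lambda s)=\lambda w(s)$ and $w(t_0)=a(t_0)/|a(t_0)|$. The obstruction to naively replacing $a$ near $t_0$ by its value $a(t_0)$ is that $a(t_0)$ is not a legal value along the whole orbit $\mathbb{T}t_0$: the equivariance constraint $f(\lambda s)=\lambda f(s)$ forbids a genuinely constant modification. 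This is exactly why I would route the entire construction through the phase $w$ together with a $\mathbb{T}$-\emph{invariant} cut-off.

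First I would fix the cut-off. Using the hypothesis $\varepsilon<\min\{\rho,1-\rho\}$, the set $\{s:\big||a(s)|-\rho\big|<\varepsilon/2\}$ is an open $\mathbb{T}$-invariant neighbourhood of $\mathbb{T}t_0$ on which $\rho/2<|a|<1$, so in particular $a\neq 0$ and $w$ is defined there. By Remark~\ref{rmk:T-symmetric neighbourhood} I may select an invariant open set $W$ with compact invariant closure inside it, and by Remark~\ref{rmk:T-symmetric Urysohn} a function $h\in S(C_0^\mathbb{T}(L))$ with $h(t_0)=1$ supported in $W$. Setting $\phi:=|h|$ yields a $\mathbb{T}$-invariant continuous function with $0\le\phi\le1$, $\phi(t_0)=1$, and support in $W$; this is the crucial gadget, since $|h|$ restores invariance that $h$ itself lacks.

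With these I would define
\[ a_\varepsilon:=(1-\phi)\,a+\phi\,\rho\,w, \qquad b_\varepsilon:=\phi\,w. \]
Because $\phi$ is invariant while $a$ and $w$ are equivariant, both lie in $C_0^\mathbb{T}(L)$, and compact support of $\phi$ inside $\{a\neq0\}$ guarantees continuity and vanishing at infinity. Writing $a=|a|w$ on $W$ gives $a_\varepsilon=w\big((1-\phi)|a|+\phi\rho\big)$ and $b_\varepsilon=\phi w$, so $|a_\varepsilon|=(1-\phi)|a|+\phi\rho\le1$ and $|b_\varepsilon|=\phi\le1$; since $a_\varepsilon=a$ off $W$ while $|a|<1$ on $W$, the value $1$ is still attained and $\|a_\varepsilon\|=\|b_\varepsilon\|=1$. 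Finally $a-a_\varepsilon=\phi\,w\,(|a|-\rho)$, which by the choice of $W$ has modulus at most $\varepsilon/2$, so $\|a-a_\varepsilon\|\le\varepsilon/2$.

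The crux is then a single clean identity: substituting the two formulas and using $a=|a|w$ on the support of $\phi$, everything factors through $w$,
\[ c_{r,\varepsilon}=r a_\varepsilon-(1+r\rho)b_\varepsilon=w\big(r(1-\phi)|a|-\phi\big). \]
Since $0\le r(1-\phi)|a|\le r<1$ and $0\le\phi\le1$, the real quantity $r(1-\phi)|a|-\phi$ lies in $[-1,r]\subset[-1,1]$, so $|c_{r,\varepsilon}|\le1$ everywhere (off the support of $\phi$ one simply has $c_{r,\varepsilon}=ra$ of modulus $<r$). At $t_0$ we have $\phi(t_0)=1$, whence $c_{r,\varepsilon}(t_0)=w(t_0)\cdot(-1)=-a(t_0)/|a(t_0)|$; therefore $\|c_{r,\varepsilon}\|=1$ and $c_{r,\varepsilon}\in F_{t_0,-\frac{a(t_0)}{|a(t_0)|}}$ for every $r\in(0,1)$. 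The last estimate is then immediate from the triangle inequality applied to $a_\varepsilon-c_{r,\varepsilon}=(1-r)a_\varepsilon+(1+r\rho)b_\varepsilon$, giving $\|a_\varepsilon-c_{r,\varepsilon}\|\le(1-r)+(1+r\rho)=2-r+r\rho$. I expect the only genuinely delicate part to be the bookkeeping that keeps $a_\varepsilon,b_\varepsilon$ inside $C_0^\mathbb{T}(L)$ with norm exactly one under the $\mathbb{T}$-constraint; once the factorization through $w$ is in place, the inequality $|c_{r,\varepsilon}|\le1$ becomes transparent.
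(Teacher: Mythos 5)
Your construction is correct, and it genuinely differs from the paper's, although the two share the same skeleton (an invariant neighbourhood of the orbit $\mathbb{T}t_0$ on which $\big||a|-|a(t_0)|\big|<\varepsilon/2$, an invariant Urysohn cutoff, and a factorization of $c_{r,\varepsilon}$ through the phase of $a$). The paper produces $a_\varepsilon$ by functional calculus, composing $a$ with an equivariant $g_\varepsilon\in C_0^{\mathbb{T}}(\mathcal{B}_{\mathbb{C}})$ that flattens the modulus to $|\alpha|$ on the whole annulus $\{s:\big||a(s)|-|\alpha|\big|\le\varepsilon/2\}$ (a global modification), and it builds $b_\varepsilon=|f_\varepsilon|\,u_\varepsilon$ from a Urysohn peak $f_\varepsilon$ together with the globally defined unimodular element $u_\varepsilon$ supplied by Lemma \ref{lem:normalization is in sphere}; you dispense with both $g_\varepsilon$ and Lemma \ref{lem:normalization is in sphere}, working instead with the single invariant cutoff $\phi=|h|$ and the merely locally defined phase $w=a/|a|$, tamed by the fact that $\phi$ is supported in $\overline{W}$, which sits inside the open set where $a\neq 0$ and $w$ is continuous — this is exactly what makes $\phi w$ and $(1-\phi)a+\phi\rho w$ continuous, equivariant members of $C_0^{\mathbb{T}}(L)$, and your norm-one checks are sound ($|a|$ attains its supremum $1$ on the compact set $\{|a|\ge 1/2\}$, necessarily outside $W$ where $a_\varepsilon=a$, and $\phi(t_0)=1$ handles $b_\varepsilon$). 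The closing mechanisms then coincide: your identity $c_{r,\varepsilon}=w\bigl(r(1-\phi)|a|-\phi\bigr)$ with bracket in $[-1,r]$ is the analogue of the paper's $c_{r,\varepsilon}(s)=\frac{a(s)}{|a(s)|}\bigl(r|\alpha|-(1+r|\alpha|)|f_\varepsilon(s)|\bigr)$ with bracket in $[-1,r|\alpha|]$, and the estimate $\|a_\varepsilon-c_{r,\varepsilon}\|\le(1-r)+(1+r|\alpha|)$ is the same triangle inequality in both. What your route buys is economy — only Remarks \ref{rmk:T-symmetric Urysohn} and \ref{rmk:T-symmetric neighbourhood} are invoked, and the normalization lemma becomes unnecessary for this step — at the modest cost of the hand-verified gluing; the paper's route keeps every auxiliary function globally defined on $L$ and reuses Lemma \ref{lem:normalization is in sphere}, which it needs anyway elsewhere.
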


\begin{proof}
To simplify the notation we shall write $\alpha=a(t_0)$.
Consider the function $g_\varepsilon\in C^\mathbb{T}_0(\mathcal{B}_\mathbb{C})$ defined on $[0,1]$ by
\begin{equation*}
    g_\varepsilon(t)=\begin{cases}
    t,\hspace{12.2mm} 0\leq t\leq |\alpha|-\varepsilon, \\
    \text{affine},\quad |\alpha|-\varepsilon<t<|\alpha|-\varepsilon/2, \\
    |\alpha|,\hspace{8.8mm} |\alpha|-\varepsilon/2\leq t\leq |\alpha|+\varepsilon/2, \\
    \text{affine},\quad |\alpha|+\varepsilon/2<t<|\alpha|+\varepsilon, \\
    t,\hspace{12.2mm} |\alpha|+\varepsilon\leq t\leq 1.
    \end{cases}
\end{equation*}

By setting $a_\varepsilon=(g_\varepsilon)_t(a)=g_\varepsilon\circ a\in C^\mathbb{T}_0(L)$, we have $\|a_\varepsilon\|=1$ and $a_\varepsilon(t_0)=\alpha$.\smallskip

Let $\iota:\mathcal{B}_\mathbb{C}\hookrightarrow\mathbb{C}$ denote the inclusion mapping $-$note that $\iota\in C^\mathbb{T}_0(\mathcal{B}_\mathbb{C})$.
Clearly, $\|g_\varepsilon-\iota\|=\varepsilon/2$, thus
\[\|a-a_\varepsilon\|=\|(g_\varepsilon)_t(a)-\iota_t(a)\|\leq \varepsilon/2.\] 

Let us find, by the continuity of $a$, an open neighborhood $\mathcal{O}_\varepsilon\subset L$ of $t_0$ satisfying $|a(s)-a(t_0)|<\varepsilon/2,$ for all $s\in\mathcal{O}_\varepsilon$. The set $W_\varepsilon:=\mathbb{T}\mathcal{O}_\varepsilon$ is open and $\mathbb{T}$-symmetric. For each $s\in W_\varepsilon$, we have $s=\lambda_s t_s$ for some $\lambda_s\in\mathbb{T}$ and $t_s\in\mathcal{O}_\varepsilon$, hence
\begin{align*}
    \big||a(s)|-|a(t_0)|\big|=\big||a(\lambda_s t_s)|-|a(t_0)|\big|=\big||a(t_s)|-|a(t_0)|\big|\leq |a(t_s)-a(t_0)|<\varepsilon/2,
\end{align*}
which leads to
\begin{equation}\label{eq:module a near alpha in W}
    |\alpha|-\varepsilon/2=|a(t_0)|-\varepsilon/2<|a(s)|<|a(t_0)|+\varepsilon/2=|\alpha|+\varepsilon/2
\end{equation}
for all $s\in W_\varepsilon$. Consequently,
\begin{equation}\label{eq:a epsilon in W}
   a_\varepsilon(s)=g_\varepsilon\big(a(s)\big)=\frac{a(s)}{|a(s)|}g_\varepsilon\big(|a(s)|\big)=\frac{a(s)}{|a(s)|}|\alpha|
\end{equation}
for all $s\in W_\varepsilon$.\smallskip

Since $W_\varepsilon$ is open, $\mathbb{T}$-symmetric and contains $t_0$,
Remark \ref{rmk:T-symmetric neighbourhood} assures the existence of an open and $\mathbb{T}$-symmetric set $V_\varepsilon$ with compact $\mathbb{T}$-symmetric closure satisfying
\begin{equation}\label{eq:T-symmetric with compact closure}
    \mathbb{T}t_0\subset V_\varepsilon\subset\overline{V_\varepsilon}\subset W_\varepsilon.
    \end{equation}
By Lemma \ref{lem:normalization is in sphere}, there exists $u_\varepsilon\in S(C^\mathbb{T}_0(L))$ satisfying
\begin{equation}\label{eq:normalization is in sphere}
    u_\varepsilon(s)=\frac{a_\varepsilon(s)}{|a_\varepsilon(s)|}, \quad\text{for all $s\in L$ with $|a_\varepsilon(s)|\geq |\alpha|-\varepsilon$}.
\end{equation}
Since $|a_\varepsilon(t_0)|=|\alpha|>|\alpha|-\varepsilon$, we have $u_\varepsilon(t_0)=\frac{\alpha}{|\alpha|}$.\smallskip

Fix, via Remark \ref{rmk:T-symmetric Urysohn}, a function $f_\varepsilon\in S(C^\mathbb{T}_0(L))$ such that $f_\varepsilon(t_0)=1$ and $f_\varepsilon\raisebox{-1ex}{$|$}_{L\backslash V_\varepsilon}\equiv 0$.\smallskip

For each $r\in]0,1[$, let us define the function
\begin{equation*}
    c_{r,\varepsilon}:=r a_\varepsilon-(1+r|\alpha|)\,  |f_\varepsilon| \, u_\varepsilon\in C^\mathbb{T}_0(L).
\end{equation*}

Clearly, we have
\begin{equation}\label{eq:c(t_0)}
   c_{r,\varepsilon}(t_0)=r a_\varepsilon(t_0)-(1+r|\alpha|)|f_\varepsilon(t_0)|u_\varepsilon(t_0)=r\alpha-(1+r|\alpha|)\frac{\alpha}{|\alpha|}=-\frac{\alpha}{|\alpha|}.
\end{equation}
Given $s\in L\backslash V_\varepsilon$, we have $f_\varepsilon(s)=0$. Hence, $c_{r,\varepsilon}(s)=r a_\varepsilon(s)$, so $|c_{r,\varepsilon}(s)|\leq r$.
If $s\in V_\varepsilon\subset W_\varepsilon$, by \eqref{eq:a epsilon in W} we have
\[a_\varepsilon(s)=\frac{a(s)}{|a(s)|}|\alpha|.\]
In particular, $|a_\varepsilon(s)|=|\alpha|>|\alpha|-\varepsilon$, and \eqref{eq:normalization is in sphere} gives 
\[u_\varepsilon(s)=\frac{a_\varepsilon(s)}{|a_\varepsilon(s)|}=\frac{|\alpha|\frac{a(s)}{|a(s)|}}{|\alpha|}=\frac{a(s)}{|a(s)|}.\]
Therefore,
\begin{align*}
    c_{r,\varepsilon}(s)&=r a_\varepsilon(s)-(1+r|\alpha|) \, |f_\varepsilon(s)| \, u_\varepsilon(s)=r|\alpha|\frac{a(s)}{|a(s)|}-(1+r|\alpha|)\frac{a(s)}{|a(s)|} \, |f_\varepsilon(s)| \\
    &=\frac{a(s)}{|a(s)|}\big(r|\alpha|-(1+r|\alpha|)|f_\varepsilon(s)|\big).
\end{align*}
Since $|f_\varepsilon(s)|\in[0,1]$, we have \begin{align*}
     -1-r|\alpha| & \leq - (1+r|\alpha|)|f_\varepsilon(s)|\leq 0, \\
     -1\leq r|\alpha| &- (1+r|\alpha|)|f_\varepsilon(s)|\leq r|\alpha|\leq 1,
\end{align*}
which implies that $\big|r|\alpha|- (1+r|\alpha|) \, |f_\varepsilon(s)|\, \big|\leq 1$ for all $s\in V_\varepsilon$. It follows that $|c_{r,\varepsilon}(s)|\leq 1$, for all $s\in L$.\smallskip

We have shown that $\|c_{r,\varepsilon}\|\leq 1$, and \eqref{eq:c(t_0)} implies that $c_{r,\varepsilon}\in F_{t_0,-\frac{\alpha}{|\alpha|}}\subset S(C^\mathbb{T}_0(L))$.\smallskip

Finally, we compute the distance between $a_\varepsilon$ and $c_{r,\varepsilon}$ to get
\begin{align*}
    \|a_\varepsilon-c_{r,\varepsilon}\|&=\|(1-r) a_\varepsilon + (1+r|\alpha|) \ |f_\varepsilon(s)| u_\varepsilon(s) \| \\
    &\leq 1-r+1+r|\alpha|=2-r+r|\alpha|.
\end{align*}
\end{proof}

We are now in a position to prove that every abelian JB$^*$-triple satisfies the second hypothesis of Proposition \ref{p Hatori sufficient} for $L_0$ as the set of representatives.

\begin{prop} \label{prop: equality M C0T}
	For every $t_0\in L_0$ and $\alpha\in\overline{\mathbb{D}}$ the equality
	$$M_{t_0,\alpha}=M_{\delta_{t_0},\alpha}=\{a\in S(C^\mathbb{T}_0(L)): a(t_0)=\alpha\}$$
	holds.
\end{prop}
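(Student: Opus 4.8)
The plan is to prove the two inclusions separately, distinguishing the regimes $|\alpha|=1$, $0<|\alpha|<1$ and $\alpha=0$. The inclusion $M_{t_0,\alpha}\subseteq\{a\in S(C^\mathbb{T}_0(L)):a(t_0)=\alpha\}$ is already available for $\alpha\in\mathbb{D}$ from \cite[Lemma 4.3]{Hat2021}, so for $|\alpha|<1$ only the reverse inclusion requires work. For $|\alpha|=1$ both inclusions are immediate: then $1-|\alpha|=0$, and since $F_{t_0,\alpha}$ is norm closed the condition $d(a,F_{t_0,\alpha})\le0$ just says $a\in F_{t_0,\alpha}=\{a:a(t_0)=\alpha\}$, while $d(a,F_{t_0,-\alpha})\le 1+|\alpha|=2$ holds automatically. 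Hence everything reduces to showing, for $|\alpha|<1$, that $a(t_0)=\alpha$ forces $d(a,F_{t_0,\frac{\alpha}{|\alpha|}})\le 1-|\alpha|$ and $d(a,F_{t_0,-\frac{\alpha}{|\alpha|}})\le 1+|\alpha|$.

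For $0<|\alpha|<1$ I would obtain the second bound directly from Lemma \ref{lem:element close in face}. For every admissible $\varepsilon$ and each $r\in\,]0,1[$ it yields $a_\varepsilon$ with $\|a-a_\varepsilon\|\le\varepsilon/2$ and $c_{r,\varepsilon}\in F_{t_0,-\frac{\alpha}{|\alpha|}}$ with $\|a_\varepsilon-c_{r,\varepsilon}\|\le 2-r+r|\alpha|$, so that $\|a-c_{r,\varepsilon}\|\le\varepsilon/2+2-r+r|\alpha|$; letting $r\to1^-$ and then $\varepsilon\to0^+$ gives $d(a,F_{t_0,-\frac{\alpha}{|\alpha|}})\le 1+|\alpha|$.

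The first bound is the step I expect to be the real obstacle, because Lemma \ref{lem:element close in face} is engineered for the face $F_{t_0,-\frac{\alpha}{|\alpha|}}$ rather than $F_{t_0,\frac{\alpha}{|\alpha|}}$. My plan is to recycle the ingredients appearing in its proof—the regularization $a_\varepsilon$, the Urysohn-type $f_\varepsilon\in S(C^\mathbb{T}_0(L))$ with $f_\varepsilon(t_0)=1$ supported in a small $\mathbb{T}$-symmetric set $V_\varepsilon$, and the normalizer $u_\varepsilon$ with $u_\varepsilon(t_0)=\frac{\alpha}{|\alpha|}$—and to set $v_\varepsilon:=a_\varepsilon+(1-|\alpha|)\,|f_\varepsilon|\,u_\varepsilon\in C^\mathbb{T}_0(L)$. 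At $t_0$ this gives $v_\varepsilon(t_0)=\alpha+(1-|\alpha|)\frac{\alpha}{|\alpha|}=\frac{\alpha}{|\alpha|}$, so $v_\varepsilon\in F_{t_0,\frac{\alpha}{|\alpha|}}$ once $\|v_\varepsilon\|\le1$ is verified. That verification splits exactly as in Lemma \ref{lem:element close in face}: off the support of $f_\varepsilon$ one has $v_\varepsilon=a_\varepsilon$, while on $V_\varepsilon$ formula \eqref{eq:a epsilon in W} makes both $a_\varepsilon(s)$ and $u_\varepsilon(s)$ proportional to $\frac{a(s)}{|a(s)|}$, whence $v_\varepsilon(s)=\frac{a(s)}{|a(s)|}\big(|\alpha|+(1-|\alpha|)|f_\varepsilon(s)|\big)$ has modulus between $|\alpha|$ and $1$. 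Finally $\|a-v_\varepsilon\|\le\|a-a_\varepsilon\|+(1-|\alpha|)\,\big\||f_\varepsilon|u_\varepsilon\big\|\le\varepsilon/2+(1-|\alpha|)$, and $\varepsilon\to0^+$ produces $d(a,F_{t_0,\frac{\alpha}{|\alpha|}})\le 1-|\alpha|$, which together with the previous paragraph places $a$ in $M_{t_0,\alpha}$.

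It remains to handle $\alpha=0$, where $\frac{\alpha}{|\alpha|}=1$ by convention and both target distances equal $1$. Here I would argue directly: since $a(t_0)=0$, continuity and Remark \ref{rmk:T-symmetric Urysohn} provide, for each small $\varepsilon>0$, a function $f\in S(C^\mathbb{T}_0(L))$ with $f(t_0)=1$ supported in a $\mathbb{T}$-symmetric neighbourhood of $t_0$ on which $|a|<\varepsilon$. Then $f\in F_{t_0,1}$ and $-f\in F_{t_0,-1}$, and estimating $\|a-f\|$ and $\|a+f\|$ by $1+\varepsilon$ (on the support of $f$ using $|a|<\varepsilon$ and $|f|\le1$, and outside it using that the difference equals $a$) yields $d(a,F_{t_0,\pm1})\le1$ after $\varepsilon\to0^+$. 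This gives $a\in M_{t_0,0}$ and completes the proposed proof.
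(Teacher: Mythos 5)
Your proof is correct, and on the key step it takes a genuinely different (and more self-contained) route than the paper. The skeleton coincides: the same three-case split, the same observation that only $\supseteq$ needs work for $|\alpha|<1$, and an identical use of Lemma \ref{lem:element close in face} for the bound $d(a,F_{t_0,-\frac{\alpha}{|\alpha|}})\le 1+|\alpha|$. The divergence is at $d(a,F_{t_0,\frac{\alpha}{|\alpha|}})\le 1-|\alpha|$: the paper outsources this to \cite[Lemma 3.15]{cueto2021exploring}, which supplies a one-parameter family $c_{r,\varepsilon}=r a_\varepsilon+(1-r|a(t_0)|)\frac{a(t_0)}{|a(t_0)|}b_\varepsilon\in F_{t_0,\frac{\alpha}{|\alpha|}}$ and requires the double limit $\varepsilon\to 0$, $r\to 1$, whereas you recycle the ingredients $a_\varepsilon$, $f_\varepsilon$, $u_\varepsilon$ from the proof of Lemma \ref{lem:element close in face} and build the single element $v_\varepsilon=a_\varepsilon+(1-|\alpha|)\,|f_\varepsilon|\,u_\varepsilon$; your verification is sound, since $v_\varepsilon(t_0)=\frac{\alpha}{|\alpha|}$, off $V_\varepsilon$ one has $v_\varepsilon=a_\varepsilon$, and on $V_\varepsilon$ the identities \eqref{eq:a epsilon in W} and \eqref{eq:normalization is in sphere} give $|v_\varepsilon(s)|=|\alpha|+(1-|\alpha|)|f_\varepsilon(s)|\le 1$ (note $v_\varepsilon\in C_0^{\mathbb T}(L)$ because $|f_\varepsilon|$ is $\mathbb T$-invariant and $u_\varepsilon$ is $\mathbb T$-homogeneous). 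This buys a proof that avoids the external citation and the auxiliary parameter $r$ altogether, at the cost of re-deriving inside Proposition \ref{prop: equality M C0T} what the earlier paper's lemma encapsulates. Your $\alpha=0$ case is likewise a slight simplification: the paper first replaces $a$ by $a_\varepsilon$ vanishing on $U_\varepsilon$ and uses disjoint supports, while you estimate $\|a\mp f\|\le 1+\varepsilon$ directly, which works because off the support of $f$ the difference equals $a$. Two small points to make explicit: in the $\alpha=0$ case the neighbourhood on which $|a|<\varepsilon$ must be saturated to $\mathbb T\mathcal{O}$ (harmless, as $|a|$ is $\mathbb T$-invariant) and shrunk via Remark \ref{rmk:T-symmetric neighbourhood} so that Remark \ref{rmk:T-symmetric Urysohn} applies; and in the $0<|\alpha|<1$ case the construction of $a_\varepsilon$, $u_\varepsilon$, $f_\varepsilon$ presupposes $\varepsilon<\min\{|\alpha|,1-|\alpha|\}$, which your phrase ``admissible $\varepsilon$'' should be understood to mean --- both are immaterial since you let $\varepsilon\to 0$.
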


\begin{proof}
We only need to show the inclusion $\supseteq$, because the other implication always holds. Take any $a\in S(C^\mathbb{T}_0(L))$ such that $a(t_0)=\alpha$. We shall discuss first the case where $|\alpha|=1$, in such case we have $a\in F_{t_0,\frac{\alpha}{|\alpha|}}=F_{t_0,\alpha}$ and $-a\in F_{t_0,-\frac{\alpha}{|\alpha|}} = F_{t_0,-\alpha}$. Thus, $d(a,F_{t_0,\frac{\alpha}{|\alpha|}})\leq d(a,a)=0=1-|\alpha|$ and $d(a,F_{t_0,-\frac{\alpha}{|\alpha|}})\leq d(a,-a)=2=1+|\alpha|$.\smallskip

We assume next that $\alpha=0$. For each $\varepsilon>0$ we can find an open neighbourhood $U_\varepsilon$ of $t_0$ and an element $a_\varepsilon\in S(C^\mathbb{T}_0(L))$ such that $\|a-a_\varepsilon\|<\varepsilon$ and $a_\varepsilon\raisebox{-1ex}{$|$}_{U_\varepsilon}\equiv 0$. Keeping in mind Remark \ref{rmk:T-symmetric neighbourhood}, we may assume that $U_\varepsilon$ is $\mathbb{T}$-symmetric and has compact $\mathbb{T}$-symmetric closure. Then, by Remark \ref{rmk:T-symmetric Urysohn}, there exists a function $b_\varepsilon\in F_{t_0,1}$ (and thus $-b_\varepsilon\in F_{t_0,-1}$) with $b_\varepsilon\raisebox{-1ex}{$|$}_{L\backslash U_\varepsilon}\equiv 0$. Putting all together, we have
\[d(a,F_{t_0,\pm 1})\leq \|a\mp b_\varepsilon\|\leq \|a-a_\varepsilon\|+\|a_\varepsilon\mp b_\varepsilon\|<\varepsilon+\max\{\|b_\varepsilon\|,\|a_\varepsilon\|\}\leq 1+\varepsilon.\]
Therefore, $d(a,F_{t_0,\pm 1})\leq 1=1\mp|\alpha|$.\smallskip

We finally assume that $0<|\alpha|=|a(t_0)|<1$. For each $\varepsilon>0$, Lemma \cite[Lemma 3.15]{cueto2021exploring}, gives $b_\varepsilon\in F_{t_0,1}$ and $a_\varepsilon\in S(C^\mathbb{T}_0(L))$ such that
	$$c_{r,\varepsilon}=r a_\varepsilon+(1-r|a(t_0)|)\frac{a(t_0)}{|a(t_0)|} b_\varepsilon\in F_{t_0,\frac{\alpha}{|\alpha|}} \quad (\forall 0< r <1 ),$$
	$a_\varepsilon(t_0)=a(t_0)$, and $\|a-a_\varepsilon\|<\varepsilon$.\smallskip

On the one hand, we have
	\begin{align*}
	\|a-c_{r,\varepsilon}\|&=\|a-a_\varepsilon\|+\|a_\varepsilon-c_{r,\varepsilon}\|<\varepsilon+\left\|(1-r)a_\varepsilon-(1-r|\alpha|)\frac{\alpha}{|\alpha|}b_\varepsilon\right\| \\
	&\leq \varepsilon+(1-r)+1-r|\alpha|=2-r-r|\alpha|+\varepsilon
	\end{align*}
	for each $\varepsilon>0$ and $0<r<1$. Letting $\varepsilon \to 0$ and $r\to 1$ we get   $d(a,F_{t_0,\frac{\alpha}{|\alpha|}})\leq 1-|\alpha|$.\smallskip

Similarly, by Lemma \ref{lem:element close in face}, there exist $a'_\varepsilon, b'_\varepsilon\in S(C^\mathbb{T}_0(L))$ such that $\|a-a'_\varepsilon\|\leq \varepsilon/2$, and  for each $0<r<1$, the function
\begin{equation*}
    c'_{r,\varepsilon}:=r  a'_\varepsilon - (1 + r\  |\alpha| ) \ b'_\varepsilon
    \end{equation*}
    lies in $F_{t_0,-\frac{\alpha}{|\alpha|}}$ with $\|a_\varepsilon-c'_{r,\varepsilon}\|\leq 2-r+r|\alpha|$. Thus,
	\begin{align*}
	\|a-c'_{r,\varepsilon}\|=\|a-a'_\varepsilon\|+\|a'_\varepsilon-c'_{r,\varepsilon}\|\leq \varepsilon/2+2-r+r|\alpha|
	\end{align*} for all $\varepsilon>0$ and $0<r<1$. By taking limits in $\varepsilon \to 0$ and $r\to 1$ we arrive at $d(a,F_{t_0,-\frac{\alpha}{|\alpha|}})\leq 1+|\alpha|$. Therefore, $a\in M_{t_0,\alpha}$.
\end{proof}

\begin{proof}[Proof of Theorem \ref{thm: jb-triples main thm}] Remark \ref{r abelian JB*-triples satisfy the condition of the Hausdorff distance} and Proposition \ref{prop: equality M C0T} guarantee that $C^\mathbb{T}_0(L)$ satisfies the hypotheses in Proposition \ref{p Hatori sufficient} \cite[Proposition 4.4]{Hat2021} for the set of representatives given by $L_0$. The just quoted proposition gives the desired conclusion. 
\end{proof}

\smallskip\smallskip

\textbf{Acknowledgements} We would like to thank Professor O. Hatori from Niigata University for sharing with us some of his recent achievements, besides some fruitful discussions and useful comments.\smallskip 

Second author partially supported by EPSRC (UK) project ``Jordan Algebras, Finsler Geometry and Dynamics'' ref. no. EP / R044228 / 1 and by the Spanish Ministry of Science, Innovation and Universities (MICINN) and European Regional Development Fund project no. PGC2018-093332-B-I00, Junta de Andaluc\'{\i}a grants FQM375 and A-FQM-242-UGR18. Fourth author partially supported by JSPS KAKENHI (Japan) Grant Number JP 20K03650. Fifth author partially supported by MCIN / AEI / 10. 13039 / 501100011033 / FEDER ``Una manera de hacer Europa'' project no. PGC2018-093332-B-I00, Junta de Andaluc\'{\i}a grants FQM375, A-FQM-242-UGR18 and PY20$\underline{\ }$ 00255, and by the IMAG--Mar{\'i}a de Maeztu grant CEX2020-001105-M / AEI / 10.13039 / 501100011033.



\begin{thebibliography}{99}

\bibitem{Banakh21Smoothtwodim} T. Banakh, Any isometry between the spheres of absolutely smooth 2-dimensional Banach spaces is linear, \emph{J. Math. Anal. Appl.} \textbf{500} (2021), no. 1, Paper No. 125104.


\bibitem{Banakh21SolutionTingley2dim} T. Banakh, Every 2-dimensional Banach space has the Mazur-Ulam property, \emph{Linear Algebra Appl.} \textbf{632} (2022), 268-280.

\bibitem{BanakhCabello21} T. Banakh, J. Cabello S{\'a}nchez, Every non-smooth 2-dimensional Banach space has the Mazur-Ulam property, \emph{Linear Algebra Appl.} \textbf{625} (2021), 1-19.

\bibitem{BeCuFerPe2018} J. Becerra Guerrero, M. Cueto-Avellaneda, F.J. Fern{\'a}ndez-Polo, A.M. Peralta, On the extension of isometries between the unit spheres of a JBW$^*$-triple and a Banach space, \emph{J. Inst. Math. Jussieu} \textbf{20}, no. 1 (2021), 277--303.

%
%

\bibitem{BKMW} K Boyko, V Kadets, M Martín, D Werner, Numerical index of Banach spaces and duality, \emph{Math. Proc. Cambridge Phil. Soc.} \textbf{142} (1) (2007), 93-102


\bibitem{CabSan19} J. Cabello-S{\'a}nchez, A reflection on Tingley's problem and some applications, \emph{J. Math. Anal. Appl.} \textbf{476} (2) (2019), 319--336.






%
%

\bibitem{ChenDong2011} L. Cheng, Y. Dong, On a generalized Mazur-Ulam question: extension of isometries between unit spheres of Banach spaces, \emph{J. Math. Anal. Appl.} \textbf{377} (2011), 464-470.

\bibitem{cueto2021exploring} Cueto-Avellaneda, Mar{\'\i}a and Hirota, Daisuke and Miura, Takeshi and Peralta, Antonio M, Exploring new solutions to Tingley's problem for function algebras, \emph{arXiv preprint arXiv:2110.11120} (2021).








\bibitem{Ding2003} G. Ding, On the extension of isometries between unit spheres of $E$ and $C(\Omega)$, \emph{Acta Math. Sin.} (Engl. Ser.) \textbf{19} (2003) 793-800.


\bibitem{FangWang06} X.N. Fang, J.H. Wang, Extension of isometries between the unit spheres of normed space $E$ and $C(\Omega)$,
\emph{Acta Math. Sinica} (Engl. Ser.), \textbf{22} (2006), 1819--1824.










\bibitem{Hat2021} O. Hatori, The Mazur-Ulam property for uniform algebras, to appear in \emph{Studia Math.} arXiv:2107.01515



\bibitem{HatOiTog} O. Hatori, S. Oi, R. Shindo Togashi, Tingley's problems on uniform algebras, \emph{J. Math. Anal. Appl.} \textbf{503} (2021), no. 2, Paper No. 125346, 14 pp.

\bibitem{KalPe2019} O.F.K. Kalenda, A.M. Peralta, Extension of isometries from the unit sphere of a rank-2 Cartan factor, \emph{Anal. Math. Phys.} \textbf{11}, 15 (2021).

\bibitem{Ka83} W. Kaup, A Riemann Mapping Theorem for bounded symmentric domains in complex Banach spaces, \emph{Math. Z.} \textbf{183} (1983), 503--529.


%
%
%
%
%

\bibitem{Liu2007} R. Liu, On extension of isometries between unit spheres of $\mathcal{L}^{\infty}(\Gamma)$-type space and a Banach space $E$, \emph{J. Math. Anal. Appl.} \textbf{333} (2007), 959-970.

%


\bibitem{MoriOza2018} M. Mori, N. Ozawa, Mankiewicz's theorem and the Mazur--Ulam property for C$^*$-algebras, \emph{Studia Math.} \textbf{250}, no. 3 (2020), 265--281.

\bibitem{Ol74} G.H. Olsen, On the classification of complex Lindenstrauss spaces, \emph{Math. Scand.} \textbf{35} (1974), 237-258.

%
%


\bibitem{Pe2020FILOMAT} A.M. Peralta, On the extension of surjective isometries whose domain is the unit sphere of a space of compact operators, to appear in \emph{FILOMAT}. arXiv:2005.11987v1

%




\bibitem{rud} W.~Rudin,
Real and complex analysis. Third edition. McGraw-Hill
Book Co., New York, 1987.

%

\bibitem{THL} D. Tan, X. Huang, R. Liu, Generalized-lush spaces and the Mazur-Ulam property, \emph{Studia Math.} \textbf{219} (2013), 139--153.


\bibitem{Tan2016Mn} R. Tanaka, The solution of Tingley's problem for the operator norm unit sphere of complex $n \times n$ matrices, \emph{Linear Algebra Appl.} \textbf{494} (2016), 274-285.


\bibitem{Tan2016finitevN} R. Tanaka, Tingley's problem on finite von Neumann algebras, \emph{J. Math. Anal. Appl.}, \textbf{451} (2017), 319-326.








\end{thebibliography}
\end{document}